\numberwithin{equation}{section}  
\newtheorem{punkt}{}[section]
\theoremstyle{plain}
\newtheorem{lemma}[punkt]{Lemma}
\newtheorem{proposition}[punkt]{Proposition}
\newtheorem{theorem}[punkt]{Theorem}
\theoremstyle{definition}
\theoremstyle{plain}
\newtheorem*{corollary*}{Corollary}
\newtheorem*{lemma*}{Lemma}
\newtheorem*{proposition*}{Proposition}
\newtheorem*{theorem*}{Theorem}
\theoremstyle{definition}
\newtheorem*{remark*}{Remark}
\newtheorem*{remarks*}{Remarks}
\newtheorem*{example*}{Example}
\newtheorem*{examples*}{Examples}
\newtheorem*{definition*}{Definition}
\newtheorem*{conjecture*}{Conjecture}
\newtheorem*{assumption*}{Assumption}
\newtheorem*{assumptions*}{Assumptions}
\newtheorem*{construction*}{Construction}
\def\mylebesgue{\lambda \mskip -8mu \lambda}
\def\myo{\mathcal{O}}
\def\ee{\mathbb{E}}
\def\re{\qopname\relax{no}{Re}\,}
\def\im{\qopname\relax{no}{Im}\,}
\def\ai{\qopname\relax{no}{Ai}\nolimits}
\def\eg{e.g.\@\xspace}
\def\ie{i.e.\@\xspace}
\def\iid{i.i.d.\@\xspace}
\begin{document}

\title{Characteristic Polynomials \\[+5pt] of Sample Covariance Matrices: \\[+5pt] The Non-Square Case}

\author{H. K\"osters}
\address{Holger K\"osters, Fakult\"at f\"ur Mathematik, Universit\"at Bielefeld,
Postfach 100131, 33501 Bielefeld, Germany}
\email{hkoesters@math.uni-bielefeld.de}


\begin{abstract}
We consider the sample covariance matrices of large data matrices
which have \iid complex matrix entries and which are 
non-square in the sense that the difference between 
the number of rows and the number of columns tends to infinity.
We show that the second-order correlation function 
of the characteristic polynomial of the sample covariance matrix
is asymptotically given by the sine kernel in the bulk of the spectrum
and by the Airy kernel at the edge of the spectrum.
Similar results are given for real sample covariance matrices.
\end{abstract}

\maketitle

\markboth{H. K\"osters}{Characteristic Polynomials of Sample Covariance Matrices}

\bigskip

\section{Introduction}

The characteristic polynomials of random matrices have 
attracted considerable attention in the last few years,
one reason being that their correlations seem to reflect
the correlations of the eigenvalues
\cite{BH1,BH2,BH3,FS3,BDS,SF3,AF3,BS,Va,GK,Ko1,Ko2,Ko3}.

In this paper we continue with our investigation \cite{Ko3} 
of the second-order correlation function 
of the characteristic polynomial of a sample covariance matrix.
This~function is given by
\begin{align}
\label{cf-def}
f(n,m;\mu,\nu) := \ee \left( \det(Z(n,m)-\mu) \det(Z(n,m)-\nu) \right) \,,
\end{align}
where $n,m \in \mathbb{N}$, $\mu,\nu \in \mathbb{R}$,
and $Z(n,m)$ is a complex or real sample covariance matrix
defined as follows:

\medskip

\textbf{Complex Sample Covariance Matrices.}
Let $Q$ be a distribution on the real line  
with expectation $0$, variance $1/2$ and finite fourth moment $b$,
and for given $n,m \in \mathbb{N}$ with $n \geq m$,
let $X := X(n,m) := (X_{ij})_{i=1,\hdots,n;j=1,\hdots,m}$
denote the $n \times m$ matrix whose entries $X_{ij}$ 
are \iid complex random variables 
whose real and imaginary parts are independent,
each with distribution $Q$.
Let $X^* = X^*(n,m)$ denote the conjugate transpose of $Z$.
Then the Hermitian $m \times m$ matrix $Z := Z(n,m) := X(n,m)^* X(n,m)$
is called the (unrescaled) \emph{sample covariance matrix} associated 
with the distribution $Q$. 

\medskip

\textbf{Real Sample Covariance Matrices.}
Let $Q$ be a distribution on the real line
with ex\-pectation $0$, variance $1$ and finite fourth moment $b$,
and for given $n,m \in \mathbb{N}$ with $n \geq m$,
let $X := X(n,m) := (X_{ij})_{i=1,\hdots,n;j=1,\hdots,m}$
denote the $n \times m$ matrix whose entries $X_{ij}$ 
are \iid real random variables with distribution $Q$.
Let $X^T = X^T(n,m)$ denote the transpose of $X$.
Then the symmetric $m \times m$ matrix $Z := Z(n,m) := X(n,m)^T X(n,m)$
is called the (unrescaled) \emph{sample covariance matrix} associated 
with the distribution $Q$. 

\pagebreak[2]
\medskip

We are interested in the asymptotic behavior of the~values
$f(n_N,m_N;\mu_N,\nu_N)$ as $N \to \infty$,
for certain choices of the sequences $(n_N)$, $(m_N)$, $(\mu_N)$, $(\nu_N)$.

In a recent paper \cite{Ko3} we considered the ``square'' case
where the difference $\alpha_N := n_N - m_N$ is fixed.
We showed that in this situation, the second-order~correlation function 
of the characteristic polynomial of a complex sample covariance matrix 
is asymptotically given
(after the appropriate rescaling)
\newline -- by the sine kernel in the bulk of the spectrum,
\newline -- by the Airy kernel at the soft edge of the spectrum,
\newline -- by the Bessel kernel at the hard edge of the spectrum.
\newline Moreover, similar results were obtained for real sample covariance matrices.

The purpose of this note is to derive similar results for the ``non-square'' case
where the difference $\alpha_N := n_N - m_N$ tends to infinity 
in such a way that the ratio $\gamma_N := m_N / n_N$ 
tends to some constant $\gamma_\infty \in {(}0;1{)}$ sufficiently quickly. 
We will show that in this situation, the second-order correlation function 
of the characteristic polynomial of a complex sample covariance matrix 
is asymptotically given
(after the appropriate rescaling)
\newline -- by the sine kernel in the bulk of the spectrum,
\newline -- by the Airy kernel at the edge of the spectrum.
\newline Note that both edges of the spectrum are ``soft'' 
when $\gamma_\infty \in {(}0;1{)}$.

These results are in consistence with results
by \textsc{Ben Arous} and \textsc{P\'ech\'e} \cite{BP},
\textsc{Tao} and \textsc{Vu} \cite{TV5},
\textsc{Soshnikov} \cite{So}, \textsc{P\'ech\'e} \cite{Pe},
and \textsc{Feldheim} and \textsc{Sodin} \cite{FeSo}, 
who obtained similar results for the (more relevant) correlation function
of the eigenvalues, yet under stronger assumptions on the underlying distributions.
\textsc{Ben~Arous} and \textsc{P\'ech\'e} \cite{BP}
proved the occurrence of the sine kernel in the~bulk of the spectrum
for a certain class of complex sample covariance matrices
with $\gamma_\infty \!=\! 1$.
Very recently, \textsc{Tao} and \textsc{Vu} \cite{TV5} 
extended this result to a quite general class of sample covariance matrices,
still with $\gamma_\infty = 1$.
\textsc{Soshnikov} \cite{So} and \textsc{P\'ech\'e}~\cite{Pe}
established the occurrence of the Airy kernel at the upper edge of the spectrum
for real and complex sample covariance matrices 
whose underlying distributions are symmetric with exponential tails
(or at least finite 36th moments).
Recently, \textsc{Feldheim} and \textsc{Sodin} \cite{FeSo}
proposed another approach to obtain these results,
which also works for the lower edge of the spectrum.
Thus, our results add some support to the wide-spread expectation
that correlation functions in random matrix theory are universal,
and subject to very weak moment conditions only.

It is well-known that the asymptotic distribution of the global spectrum
of the rescaled sample covariance matrix $(1/n) \, Z(n,m)$ is given 
(both in the complex case and in the real case) 
by the Mar\v{c}enko-Pastur distribution of parameter $\gamma_\infty$, 
which has the density
\begin{align}
\label{mp}
g(\xi) := \frac{1}{2\pi\gamma_\infty \, \xi} \sqrt{(\xi-\xi_*)(\xi^*-\xi)} \,, \quad \xi_* \leq \xi \leq \xi^* \,,
\end{align}
where $\xi_* := (1-\sqrt{\gamma_\infty})^2$ and $\xi^* := (1+\sqrt{\gamma_\infty})^2$.
Note that $\xi_*,\xi^*$ as well as $g$ implicitly depend on $\gamma_\infty$,
but this dependence will be kept implicit throughout this paper.
Also, note that $\xi_*,\xi^*$ are the solutions to the equation
\begin{align}
\xi^2 - 2 (1+\gamma_\infty) \xi + (1-\gamma_\infty)^2 = 0 \,.
\label{boundary}
\end{align}

Unless otherwise mentioned, we will always assume that
$1 \leq m_N \leq n_N := N$ for all $N \in \mathbb{N}$,
$\alpha_N := n_N - m_N \to \infty$ as~$N \to \infty$, and 
$\gamma_N := m_N / n_N \to \gamma_\infty$ as~$N \to \infty$,
where $\gamma_\infty \in {]}0;1{]}$.
Our main results are as~follows:

\begin{theorem}
\label{bulk-1}
Suppose that $n_N = N$ and $m_N = N \gamma_\infty + o(N)$ as $N \to \infty$.
Let $f$ denote the second-order correlation function of a complex sample covariance matrix
satisfying our standing moment conditions.
For any $\xi \in (\xi_*,\xi^*)$, $\mu,\nu \in \mathbb{R}$, setting
$$
Z_N(\xi,\mu,\nu) := \left( N^2 \xi^2 + N \xi (\mu + \nu) + \mu \nu \right)^{\alpha_N/2} \, \exp \left( -N\xi - \tfrac12(\mu+\nu) \right) \,,
$$
$\hat\mu := \mu / (\gamma_\infty g(\xi))$,
$\hat\nu := \nu / (\gamma_\infty g(\xi))$,
we have
\begin{multline}
\label{bulk-1F}
\lim_{N \to \infty} \left( (\gamma_\infty g(\xi))^{-1} \, Z_N(\xi,\hat\mu,\hat\nu) \cdot \frac{f(n,m;N\xi+\hat\mu,N\xi+\hat\nu)}{n! \, m!} \right) \\
= \exp(b^*) \, \mathbb{S}(\mu,\nu) \,,
\end{multline}
where $b^* := 2(b-\tfrac34)$ and $\mathbb{S}$ is defined by
\begin{align}
\label{sine}
\mathbb{S}(x,y) := \frac{\sin \pi(x-y)}{\pi(x-y)} \,.
\end{align}
\end{theorem}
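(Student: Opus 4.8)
The plan is to derive an exact formula for $f(n,m;\mu,\nu)$ valid for finite $N$, and then perform asymptotic analysis in the bulk scaling regime. The starting point is the well-known connection between averages of characteristic polynomials of sample covariance matrices and orthogonal polynomials for the associated weight on $(0,\infty)$, namely the Laguerre weight $w_{\alpha}(x) = x^{\alpha} e^{-x}$ (up to normalization depending on whether we are in the complex or real case). For the complex ensemble, the expectation $\ee \det(Z-\mu)\det(Z-\nu)$ should be expressible, via an Andréief/Heine-type identity, as a $2\times 2$ determinant built from the Laguerre polynomials $L^{(\alpha_N)}$ of degrees $m$ and $m+1$ (a Christoffel--Darboux-type structure), divided by an appropriate product of norms. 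Concretely, I expect a formula of the shape
$$
\frac{f(n,m;\mu,\nu)}{n!\,m!} = (\text{explicit prefactor}) \cdot \frac{1}{\mu-\nu}\,\bigl( \pi_{m+1}(\mu)\pi_m(\nu) - \pi_{m+1}(\nu)\pi_m(\mu) \bigr),
$$
where $\pi_k$ are the monic Laguerre polynomials of parameter $\alpha_N$, and the prefactor collects factorials, leading coefficients, and the moment-dependent correction $\exp(b^*)$ coming from the fact that the entries are not exactly Gaussian (this correction is presumably handled exactly as in \cite{Ko3}, since the relevant computation only sees the first four moments).

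The second step is the asymptotic analysis. I would insert $\mu = N\xi + \hat\mu$, $\nu = N\xi + \hat\nu$ with $\xi \in (\xi_*,\xi^*)$ fixed, and use the classical Plancherel--Rotach asymptotics for Laguerre polynomials $L_m^{(\alpha)}$ in the oscillatory regime, where the ratio $\alpha/m \to (1-\gamma_\infty)/\gamma_\infty$ stays bounded away from $0$ and $\infty$. In this regime $L_m^{(\alpha)}(Nx)$ behaves, after extracting an exponentially growing/decaying envelope, like $(\text{amplitude}) \cdot \cos\bigl( N\phi(x) + \text{lower order} \bigr)$ for a suitable phase function $\phi$; the density of the Marčenko--Pastur law enters precisely as $\phi'(\xi) = \pi \gamma_\infty g(\xi)$ (this is why the rescaling by $\gamma_\infty g(\xi)$ and the definition of $\hat\mu,\hat\nu$ are chosen as they are). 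The function $Z_N(\xi,\hat\mu,\hat\nu)$ is exactly the envelope factor that cancels the exponential growth: the term $(N^2\xi^2 + N\xi(\mu+\nu)+\mu\nu)^{\alpha_N/2} = \bigl((N\xi+\mu)(N\xi+\nu)\bigr)^{\alpha_N/2}$ cancels the $x^{\alpha/2}$ part of the Laguerre envelope evaluated at the two arguments, while $\exp(-N\xi - \tfrac12(\mu+\nu))$ cancels the $e^{-x/2}$ part.

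The third step is purely trigonometric: once each $\pi_m(N\xi+\hat\mu)$ is replaced by its leading asymptotic $\asymp A(\xi)\cos(N\phi(\xi) + \hat\mu\,\phi'(\xi)/N \cdot N + \cdots)$ — more precisely the phase picks up a term linear in $\hat\mu$ with coefficient $\pi\gamma_\infty g(\xi)$ — the combination $\pi_{m+1}(\mu)\pi_m(\nu) - \pi_{m+1}(\nu)\pi_m(\mu)$ collapses, via the product-to-sum identity, to something proportional to $\sin\bigl( \pi \gamma_\infty g(\xi)(\hat\mu - \hat\nu) \bigr) = \sin(\pi(\mu-\nu))$ after substituting $\hat\mu - \hat\nu = (\mu-\nu)/(\gamma_\infty g(\xi))$; dividing by $\hat\mu - \hat\nu$ and the extra factor $(\gamma_\infty g(\xi))^{-1}$ produces exactly $\mathbb{S}(\mu,\nu)$. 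The non-oscillatory cross terms and the $O(1/N)$ corrections to the phase must be shown to vanish in the limit; this is routine given uniform error bounds in the Plancherel--Rotach asymptotics on compact subsets of $(\xi_*,\xi^*)$.

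The main obstacle, I expect, is establishing the Laguerre asymptotics with sufficient uniformity and with explicit control of the \emph{subleading} phase term — specifically, showing that shifting the argument from $N\xi$ to $N\xi + \hat\mu$ shifts the phase by precisely $\pi\gamma_\infty g(\xi)\,\hat\mu + o(1)$, uniformly for $\hat\mu$ in compact sets, while $\alpha_N$ is itself growing. The standard Plancherel--Rotach statements are usually phrased for fixed $\alpha$, so one either needs a version with $\alpha$ growing proportionally to $m$ (available in the literature, e.g. via steepest descent on the contour integral representation of $L_m^{(\alpha)}$), or one derives the needed expansion directly from the integral representation. The hypothesis that $m_N = N\gamma_\infty + o(N)$ rather than something more precise should be enough because only the leading exponential rate and the leading oscillation frequency matter, and both depend on $\gamma_N$ continuously; still, one must check that the $o(N)$ slack in $m_N$ does not corrupt the $O(1)$ phase shift, which is the delicate bookkeeping point. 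Everything else — the combinatorial derivation of the exact formula, the moment correction $\exp(b^*)$, and the final trigonometric simplification — follows the template already set up in \cite{Ko3}.
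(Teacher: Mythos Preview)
Your approach differs substantially from the paper's. The paper does not pass through Laguerre polynomials or Plancherel--Rotach asymptotics at all. Its starting point is the exact generating function (Proposition~\ref{GF})
\[
\sum_{m\ge 0}\frac{f(m+\alpha,m;\mu,\nu)}{(m+\alpha)!\,m!}\,z^m = \frac{\exp\bigl(-(\mu+\nu)\tfrac{z}{1-z}+b^*z\bigr)\,I_\alpha\bigl(\tfrac{2\sqrt{\mu\nu z}}{1-z}\bigr)}{(1-z)^2\,(\mu\nu z)^{\alpha/2}},
\]
valid for \emph{arbitrary} $Q$ satisfying the moment conditions, with the fourth moment entering only through the factor $e^{b^*z}$. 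This is inverted as a Cauchy contour integral in $z$; the asymptotic analysis consists of deforming the contour to pass near $z=1$ and applying uniform large-order asymptotics for the modified Bessel function $I_\alpha$ (Olver's expansions in the half-plane, plus separate bounds near and on the imaginary axis to kill the remaining arcs). The sine kernel then drops out of a Laplace inversion identity for $\int e^{tz-a^2/4z}z^{-3/2}\,dz$, not from a product-to-sum trigonometric manipulation.

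Your route is viable for the Gaussian case: by the Hille--Hardy formula the $b^*=0$ version of the generating function above is precisely the bilinear Laguerre generating series, so the coefficient is a Christoffel--Darboux sum, and Plancherel--Rotach with parameter growing proportionally to the degree would finish. But your treatment of the non-Gaussian correction has a gap: for finite $N$ the exact $f$ is \emph{not} the Laguerre Christoffel--Darboux kernel times a constant $\exp(b^*)$. The factor $e^{b^*z}$ in the generating function convolves the Laguerre coefficients with $(b^*)^k/k!$, and the constant $e^{b^*}$ emerges only in the limit (equivalently, as the contour concentrates at $z=1$). So your ``presumably handled exactly as in \cite{Ko3}'' in practice means deriving and exploiting the generating function above --- which is the paper's method, not a correction to yours. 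What your approach would buy is a direct link to standard orthogonal-polynomial machinery; what the paper's approach buys is that the general-$Q$ dependence is built into the exact identity from the outset, and that uniform Bessel asymptotics of large order replace the need for Laguerre asymptotics with a growing parameter.
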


\begin{theorem}
\label{bulk-2}
Suppose that $n_N = N$ and $m_N = N \gamma_\infty + o(N)$ as $N \to \infty$.
Let $f$ denote the second-order correlation function of a real sample covariance matrix
satisfying our standing moment conditions.
For any $\xi \in (\xi_*,\xi^*)$, $\mu,\nu \in \mathbb{R}$, setting
$$
Z_N(\xi,\mu,\nu) := \left( N^2 \xi^2 + N \xi (\mu + \nu) + \mu \nu \right)^{\alpha_N/2} \, \exp \left( -N\xi - \tfrac12(\mu+\nu) \right) \,,
$$
$\hat\mu := \mu / (\gamma_\infty g(\xi))$,
$\hat\nu := \nu / (\gamma_\infty g(\xi))$,
we have
\begin{multline}
\label{bulk-2F}
\lim_{N \to \infty} \left( N^{-1} \, \xi^{-1} \, (\gamma_\infty g(\xi))^{-3} \, Z_N(\xi,\hat\mu,\hat\nu) \cdot \frac{f(n,m;N\xi+\hat\mu,N\xi+\hat\nu)}{n! \, m!} \right) \\
= \exp(b^*) \, \widetilde{\mathbb{S}}(\mu,\nu) \,,
\end{multline}
where $b^* := (b-3)$ and $\widetilde{\mathbb{S}}$ is the sine kernel defined by
\begin{align}
\label{sine2}
\widetilde{\mathbb{S}}(x,y) := \frac{2\sin \pi(x-y)}{\pi(x-y)^3} - \frac{2\cos \pi(x-y)}{(x-y)^2} \,.
\end{align}
\end{theorem}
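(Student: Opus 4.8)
The plan is to follow the scheme underlying Theorem~\ref{bulk-1}, transporting each step from the unitarily invariant ($\beta=2$) to the orthogonally invariant ($\beta=1$) setting, in three stages: an exact evaluation of $f$ in the Gaussian case; a universality step reducing the general case to the Gaussian one; and an asymptotic analysis of the resulting Gaussian expression. It is the passage from $\beta=2$ to $\beta=1$ that replaces the sine kernel $\mathbb{S}$ of \eqref{sine} by its companion $\widetilde{\mathbb{S}}$ of \eqref{sine2} --- note that $\widetilde{\mathbb{S}}(x,y) = -\tfrac{2}{x-y}\,\partial_x\mathbb{S}(x,y)$, a first-order differential transform of the sine kernel --- and that accounts for the extra normalizing factor $N^{-1}\xi^{-1}(\gamma_\infty g(\xi))^{-3}$ in \eqref{bulk-2F} in place of $(\gamma_\infty g(\xi))^{-1}$ in \eqref{bulk-1F}. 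For the first stage I would use that, for real Gaussian entries, $Z(n,m)=X^T X$ is a Laguerre orthogonal ensemble, so that $\ee(\det(Z-\mu)\det(Z-\nu))$ admits a closed form. The required exact identity --- obtained as in \cite{Ko3}, and structurally the $\beta=1$ analogue (a Pfaffian, equivalently an expression in skew-orthogonal polynomials) of the Christoffel--Darboux-type identity used for Theorem~\ref{bulk-1} --- expresses $f_{\mathrm{Gauss}}(n,m;\mu,\nu)/(n!\,m!)$ through Laguerre polynomials of degree close to $m$ and parameter of order $\alpha_N$, evaluated at $\mu$ and $\nu$, together with their incomplete integrals against the Laguerre weight; the single extra integration present here is exactly what produces the higher powers of $(x-y)^{-1}$ in $\widetilde{\mathbb{S}}$.

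For the second stage I would pass from general distributions to the Gaussian one. Expanding $\ee(\det(X^T X-\mu)\det(X^T X-\nu))$ as a sum over pairings of the matrix entries and comparing term by term with the Gaussian expectation, the discrepancy is governed by the joint cumulants of the $X_{ij}$; under the standing moment conditions only the fourth cumulant of the entries, equal to $b-3$, contributes to leading order, and because $\alpha_N = n_N-m_N \to \infty$ the combinatorial prefactor of this contribution converges while the contributions of higher cumulants vanish. Hence $f(n,m;\mu,\nu) = f_{\mathrm{Gauss}}(n,m;\mu,\nu)\cdot(\exp(b^*)+o(1))$ uniformly on the relevant scale, with $b^*=b-3$. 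This is the same mechanism as in the complex case, the constant $b^*$ differing only because the relevant object is the fourth cumulant of a real rather than a complex matrix entry; in both cases $\exp(b^*)$ is the exponential of the fourth cumulant of the matrix entry. This is precisely where the ``non-square'' hypothesis $\alpha_N\to\infty$ is used, and it is what makes a finite fourth moment suffice.

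For the third stage I would carry out the asymptotic analysis of the Gaussian formula. For $\xi\in(\xi_*,\xi^*)$ the point $N\xi$ lies in the bulk of the spectrum of $Z(n,m)$, where the mean eigenvalue density is asymptotically $\gamma_\infty g(\xi)$. Writing $\mu=N\xi+\hat\mu$, $\nu=N\xi+\hat\nu$ and inserting the Plancherel--Rotach asymptotics of the Laguerre polynomials --- valid when degree and parameter tend to infinity with their ratio converging to a constant fixed by $\gamma_\infty$ --- each polynomial oscillates with local wavenumber $\pi\gamma_\infty g(\xi)$, so that after the rescaling $\hat\mu=\mu/(\gamma_\infty g(\xi))$ the oscillatory factors become functions of $\pi(\mu-\nu)$. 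Multiplying by $Z_N(\xi,\hat\mu,\hat\nu)$ compensates the exponential growth coming from the Laguerre weights and renders all terms bounded; collecting the leading oscillatory contributions of the polynomial pieces and of the incomplete-integral pieces --- and checking that the non-oscillatory part of the latter is of lower order or is absorbed into the normalization --- yields $\widetilde{\mathbb{S}}(\mu,\nu)$ up to explicit constants, which one matches against the stated normalizing factors.

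I expect the main obstacle to be this third stage in the real case. Whereas for $\beta=2$ two Laguerre asymptotics combine transparently into $\mathbb{S}$, here one must control the asymptotics of several Laguerre functions of consecutive degrees simultaneously, together with their incomplete integrals, and show that after all cancellations precisely the combination $\tfrac{2\sin\pi(x-y)}{\pi(x-y)^3}-\tfrac{2\cos\pi(x-y)}{(x-y)^2}$ survives; in particular the a priori dominant, non-oscillatory contribution of the incomplete Laguerre integrals has to be shown to cancel. Making this rigorous requires Plancherel--Rotach expansions with error terms controlled uniformly as $\alpha_N\to\infty$ and $\gamma_N\to\gamma_\infty$, and uniformly for $\mu,\nu$ in compact neighbourhoods of $N\xi$; establishing these uniform estimates, and organizing the cancellations that turn the $\beta=1$ formula into $\widetilde{\mathbb{S}}$, is the technical heart of the argument.
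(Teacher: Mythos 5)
Your route is genuinely different from the paper's, and also considerably more laborious. The paper does not prove Theorem~\ref{bulk-2} in detail; it remarks that the argument for Theorem~\ref{bulk-1} carries over, and that argument rests entirely on Proposition~\ref{GF}, the exponential generating function \eqref{GFF} for $f(m+\alpha,m;\mu,\nu)/((m+\alpha)!\,m!)$, which holds \emph{simultaneously} for $\beta=2$ and $\beta=1$, the only differences being the exponent $1+2/\beta$ on $(1-z)$ and the value of $b^*$. Everything you propose to do in three stages --- exact Gaussian evaluation via skew-orthogonal polynomials/Pfaffians, a cumulant-expansion universality reduction, and Plancherel--Rotach asymptotics for Laguerre functions with large degree and large parameter --- is in the paper compressed into that one identity plus a contour-integral saddle-point analysis: one inverts the generating function along the contour $\sigma$, localizes near $z=1$, and handles the growing order $\alpha_N$ of the modified Bessel function $I_{\alpha}$ by means of the uniform large-order asymptotics of Section~3. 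Passing from the complex to the real case changes only the factor $(1-z)^{-1+(-2/\beta)}$; after the substitution $z=(1-1/N)e^{iu/N}$ this contributes an extra factor $N/(1-iu)$, which is exactly where the extra $N^{-1}$ in the normalization of \eqref{bulk-2F} comes from and which raises the Laplace-inversion integrand from $z^{-3/2}$ to $z^{-5/2}$, turning $\mathbb{S}$ into $\widetilde{\mathbb{S}}$. No skew-orthogonal polynomials, Pfaffians, or Plancherel--Rotach expansions appear anywhere.

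Two substantive remarks on your plan. First, your assertion that the hypothesis $\alpha_N\to\infty$ is ``what makes a finite fourth moment suffice'' is wrong: Proposition~\ref{GF} holds for every fixed $\alpha$, with the underlying distribution entering only through $b^*$, and the square case $\alpha_N\equiv\alpha$ was already treated in \cite{Ko3} under the same fourth-moment assumption. The condition $\alpha_N\to\infty$ matters only in the asymptotic analysis, where it forces the replacement of the fixed-order Bessel estimates of \cite{Ko3} by the uniform large-order asymptotics \eqref{bessel-u1}--\eqref{bessel-u3} and the bounds of Lemma~\ref{jboundlemma}. Second, your third stage --- uniform Plancherel--Rotach expansions for Laguerre functions and their incomplete integrals as degree and parameter grow together, and the organization of the cancellations that produce $\widetilde{\mathbb{S}}$ out of the $\beta=1$ Pfaffian --- is precisely the technical burden the generating-function route is designed to sidestep, and you rightly flag it as the weak point. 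In short, your plan is plausible, but carrying it out would essentially re-derive the content of Proposition~\ref{GF} (your stages 1 and 2) and then replace a single, already-controlled contour integral by a substantially harder asymptotic-matching problem (your stage 3).
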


\begin{theorem}
\label{edge-1}
Suppose that $n_N = N$ and $m_N = N \gamma_\infty + o(N^{1/3})$ as $N \to \infty$.
Let $f$ denote the second-order correlation function of a complex sample covariance matrix
satisfying our standing moment conditions.
For any $\mu,\nu \in \mathbb{R}$, setting
$$
Z_N(\xi,\mu,\nu) := \left( \xi^2 N^2 + (\mu + \nu) \xi N^{4/3}  + \mu \nu N^{2/3} \right)^{\alpha_N/2} \, \exp \left( - \xi N - \tfrac12(\mu+\nu)N^{1/3} \right) \,,
$$
$\hat\mu := \xi^{2/3} \gamma_\infty^{-1/6} \mu$,
$\hat\nu := \xi^{2/3} \gamma_\infty^{-1/6} \nu$,
we have
\begin{multline}
\label{edge-1F}
\lim_{N \to \infty} \left( \xi^{2/3} \gamma_\infty^{-1/6} \, N^{1/3} \, Z_N(\xi,+\hat\mu,+\hat\nu) \cdot \frac{f(n,m;\xi N+\hat\mu N^{1/3},\xi N+\hat\nu N^{1/3})}{n! \, m!} \right) \\
= \exp(b^*) \, \mathbb{A}(\mu,\nu) 
\end{multline}
for $\xi = \xi^*$ and, when $\gamma_\infty \ne 1$,
\begin{multline}
\label{edge-1G}
\lim_{N \to \infty} \left( \xi^{2/3} \gamma_\infty^{-1/6} \, N^{1/3} \, Z_N(\xi,-\hat\mu,-\hat\nu) \cdot \frac{f(n,m;\xi N-\hat\mu N^{1/3},\xi N-\hat\nu N^{1/3})}{n! \, m!} \right) \\
= \exp(b^*) \, \mathbb{A}(\mu,\nu) 
\end{multline}
for $\xi = \xi_*$,
where $b^* := 2(b-\tfrac34)$ and $\mathbb{A}$ is the Airy kernel defined by
\begin{align}
\label{airy}
\mathbb{A}(x,y) := \frac{\ai(x) \ai'(y) - \ai'(x) \ai(y)}{x-y} \,.
\end{align}
\end{theorem}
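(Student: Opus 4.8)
The plan is to start from the exact identity for $f(n,m;\mu,\nu)$ valid for every $n\geq m$ --- the non-square analogue of the formula underlying \cite{Ko3} --- which puts $f(n,m;\mu,\nu)/(n!\,m!)$ in closed form and, after suitable manipulation, into a double contour integral of the schematic shape
$$
\frac{f(n,m;\mu,\nu)}{n!\,m!} \;=\; \frac{1}{(2\pi\i)^2}\oint\!\!\oint \frac{h_{n,m}(z_1;\mu)\,h_{n,m}(z_2;\nu)}{z_1-z_2}\;c_{n,m}(z_1,z_2;b)\;dz_1\,dz_2\,,
$$
where each one-variable factor $h_{n,m}(\,\cdot\,;\mu)$ is of Laguerre type and depends on $Q$ only through the prescribed variance, whereas the coupling factor $c_{n,m}$ carries the entire dependence on the fourth moment $b$ (this reflects the fact that the average of a single characteristic polynomial of an \iid sample covariance matrix is insensitive to $Q$ beyond its variance). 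Since the identity itself does not see how $\alpha_N=n_N-m_N$ behaves, the whole of the new content is the asymptotic analysis of the integral under the edge scaling $\mu=\xi N+\hat\mu N^{1/3}$, $\nu=\xi N+\hat\nu N^{1/3}$ with $\xi\in\{\xi_*,\xi^*\}$ and $\gamma_N\to\gamma_\infty$.

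First I would write $h_{n,m}(z;\mu)=\exp\bigl(N\,\Phi(z)+N^{1/3}\,\Psi_\mu(z)+\cdots\bigr)$, with the leading action $\Phi$ depending on $z$ and, through $\gamma_N$, on $\gamma_\infty$, and the subleading term carrying $\hat\mu$. The structural point is that, precisely because $\xi^*$ (respectively $\xi_*$) solves the boundary equation \eqref{boundary}, the corresponding critical point $z_c$ of $\Phi$ is degenerate of the third order: $\Phi'(z_c)=\Phi''(z_c)=0$ but $\Phi'''(z_c)\neq 0$. Rescaling $z=z_c+w\,N^{-1/3}$ then turns each factor into an Airy-type integral $\int\exp\bigl(\tfrac13 w^3-\lambda w\bigr)\,dw$ (a constant multiple of $\ai(\lambda)$), where $\lambda$ is a fixed nonzero multiple of $\hat\mu$ which the substitution $\hat\mu=\xi^{2/3}\gamma_\infty^{-1/6}\mu$ normalizes to $\mu$; and the Cauchy factor $1/(z_1-z_2)=N^{1/3}/(w_1-w_2)$ is exactly what converts the resulting double integral into the Airy kernel $\mathbb{A}(\mu,\nu)$, via the representation $\mathbb{A}(x,y)=\int_0^\infty\ai(x+s)\,\ai(y+s)\,ds$. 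In the same limit $c_{n,m}\to\exp(b^*)$ with $b^*=2(b-\tfrac34)$.

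Next I would carry out the steepest-descent estimate in the usual way: deform both contours so that they pass through $z_c$ along the directions of steepest descent of $\re\Phi$; use $\re\Phi(z)<\re\Phi(z_c)$ away from $z_c$, together with the hypothesis $m_N=N\gamma_\infty+o(N^{1/3})$ --- which is exactly what keeps the true edge $N\xi_*(\gamma_N)$, respectively $N\xi^*(\gamma_N)$, within an $o(N^{1/3})$ distance of $N\xi$, so that neither the position nor the cubic character of $z_c$ is disturbed on the scale $N^{-1/3}$ --- to discard the contribution from outside a shrinking neighborhood of $z_c$; and evaluate the local contribution by dominated convergence. Finally I would account for the prefactors: $Z_N$ cancels precisely the non-Airy part of the integrand at $z_c$, the powers of $N$ and the constant $\xi^{2/3}\gamma_\infty^{-1/6}$ come from the cubic-fluctuation Jacobian and from the change of variable $\hat\mu\leftrightarrow\mu$, and $\exp(b^*)$ emerges as the limit of $c_{n,m}$. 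The assertion \eqref{edge-1G} at $\xi=\xi_*$ --- valid only when $\gamma_\infty\neq 1$, since for $\gamma_\infty=1$ one has $\xi_*=0$, a hard rather than a soft edge --- is obtained by applying the same argument to the other critical point of $\Phi$.

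The step I expect to be the main obstacle is making the steepest-descent approximation near the third-order critical point uniform and rigorous: one has to show that the cubic term of $\Phi$ genuinely dominates the Taylor remainder throughout a window of order $N^{-1/3}$ around $z_c$, and that the complementary part of the contour integral is negligible, with all estimates robust to the facts that $\alpha_N$ grows with $N$ (proportionally, when $\gamma_\infty<1$) and that $m_N$ is pinned down only to within $o(N^{1/3})$. The parallel bookkeeping of the many prefactors --- so that the normalizing factor $Z_N$, the power of $N$, and the constant $\xi^{2/3}\gamma_\infty^{-1/6}$ come out exactly as in \eqref{edge-1F}--\eqref{edge-1G} --- is the other part that needs to be handled with care.
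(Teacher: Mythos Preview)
Your proposal takes a genuinely different route from the paper. The paper does \emph{not} start from a double contour integral with a Cauchy kernel; its actual starting point is the single-variable generating function of Proposition~\ref{GF},
\[
\sum_{m\ge 0}\frac{f(m+\alpha,m;\mu,\nu)}{(m+\alpha)!\,m!}\,z^m
=\frac{\exp\bigl(-(\mu+\nu)\tfrac{z}{1-z}+b^*z\bigr)\,I_\alpha\bigl(\tfrac{2(\mu\nu z)^{1/2}}{1-z}\bigr)}{(1-z)^{2}\,((\mu\nu z)^{1/2})^\alpha}\,,
\]
which via Cauchy's formula expresses $f/(n!\,m!)$ as a \emph{single} contour integral around the origin. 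The entire analysis then hinges on the modified Bessel function $I_\alpha$ of \emph{large order} $\alpha=\alpha_N\to\infty$: the contour is deformed to pass near $z=1$ at distance $N^{-1/3}$, split into five arcs, and on each arc a different uniform asymptotic for $I_\alpha(\alpha z)$ from Olver is invoked (the expansion \eqref{bessel-u1} in the right half-plane, the bound \eqref{bessel-u3} near the imaginary axis, and the bounds \eqref{jbound-1a}--\eqref{jbound-2a} on the imaginary axis itself, including near the turning points $\pm i\alpha$). The cubic term that produces the Airy integral arises jointly from the Taylor expansion of the non-Bessel factor and from the expansion $\eta(z)=z-\tfrac12 z^{-1}+\tfrac{1}{24}z^{-3}+\cdots$ of the Bessel exponent; the cancellation of the $N^{2/3}$ and $N^{1/3}$ terms then comes from the boundary equation \eqref{boundary}, exactly as you anticipate.

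Your double-integral-plus-$1/(z_1-z_2)$ picture is the standard mechanism in the \emph{Gaussian} (Laguerre) setting, and if such a representation were available for general $Q$ your steepest-descent outline would be a plausible alternative. But the formula you posit is not the one established in \cite{Ko3}; in the identity that is actually on the table the two spectral parameters $\mu,\nu$ are already fused inside $I_\alpha$, and there is no visible Cauchy kernel to peel off. So either you must first derive a double-integral identity of your claimed shape for general fourth-moment $Q$ --- which you have not done and which is not obvious --- or you must rework the analysis around the single integral and large-order Bessel asymptotics, as the paper does. In that approach the ``uniform cubic steepest descent'' you flag as the hard step is replaced by the problem of controlling $I_\alpha(w(z))$ uniformly as $w(z)$ sweeps from the right half-plane onto the imaginary axis and past the turning points; this is where the paper spends most of its effort (the estimates \eqref{claim22}--\eqref{claim23} and the Bessel lemmas of Section~3).
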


\begin{theorem}
\label{edge-2}
Suppose that $n_N = N$ and $m_N = N \gamma_\infty + o(N^{1/3})$ as $N \to \infty$.
Let $f$ denote the second-order correlation function of a real sample covariance matrix
satisfying our standing moment conditions.
For any $\mu,\nu \in \mathbb{R}$, setting
$$
Z_N(\xi,\mu,\nu) := \left( \xi^2 N^2 + (\mu + \nu) \xi N^{4/3}  + \mu \nu N^{2/3} \right)^{\alpha_N/2} \, \exp \left( - \xi N - \tfrac12(\mu+\nu)N^{1/3} \right) \,,
$$
$\hat\mu := \xi^{2/3} \gamma_\infty^{-1/6} \mu$,
$\hat\nu := \xi^{2/3} \gamma_\infty^{-1/6} \nu$,
we have
\begin{multline}
\label{edge-2F}
\lim_{N \to \infty} \left( \xi \, \gamma_\infty^{-1/2} \, Z_N(\xi,+\hat\mu,+\hat\nu) \cdot \frac{f(n,m;\xi N+\hat\mu N^{1/3},\xi N+\hat\nu N^{1/3})}{n! \, m!} \right) \\
= \exp(b^*) \, \widetilde{\mathbb{A}}(\mu,\nu) 
\end{multline}
for $\xi = \xi^*$ and, when $\gamma_\infty \ne 1$,
\begin{multline}
\label{edge-2G}
\lim_{N \to \infty} \left( \xi \, \gamma_\infty^{-1/2} \, Z_N(\xi,-\hat\mu,-\hat\nu) \cdot \frac{f(n,m;\xi N-\hat\mu N^{1/3},\xi N-\hat\nu N^{1/3})}{n! \, m!} \right) \\
= \exp(b^*) \, \widetilde{\mathbb{A}}(\mu,\nu) 
\end{multline}
for $\xi = \xi_*$,
where $b^* := (b-3)$ and $\widetilde{\mathbb{A}}$ is defined by
\begin{multline}
\label{airy2}
\qquad \widetilde{\mathbb{A}}(x,y) := \frac{2 \ai(x) \ai'(y) - 2 \ai'(x) \ai(y)}{(x-y)^3} \\ + \frac{(x+y) \ai(x) \ai(y) - 2 \ai'(x) \ai'(y)}{(x-y)^2} \,. \quad
\end{multline}
\end{theorem}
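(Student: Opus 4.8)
The plan is to reduce the statement to the asymptotics, at a soft spectral edge, of Laguerre polynomials whose parameter grows proportionally to their degree, exactly as one does for Theorem~\ref{edge-1}. The starting point is the exact formula for $f$ from~\cite{Ko3}. With $\alpha_N := n_N - m_N$, $w(x) := x^{\alpha_N}e^{-x}$, and $\mu' := \xi N + \hat\mu N^{1/3}$, $\nu' := \xi N + \hat\nu N^{1/3}$, that formula gives
\[
Z_N(\xi,\hat\mu,\hat\nu)\cdot\frac{f(n,m;\mu',\nu')}{n!\,m!}\;=\;C_N(\mu',\nu')\cdot\Phi_N(\mu',\nu')\,,
\]
where: (i) $Z_N(\xi,\hat\mu,\hat\nu) = \sqrt{w(\mu')\,w(\nu')}$, so this factor merely supplies the square root of the Laguerre weight; (ii) $C_N$ is the factor accounting for the non-Gaussianity of $Q$, which tends to $\exp(b^*)$ with $b^* = b-3$ exactly as in the square case --- the argument there does not use that $\alpha_N$ is bounded and may be quoted verbatim; and (iii) $\Phi_N$ reduces, in the Gaussian case and after a Christoffel--Darboux-type resummation, to a fixed finite combination of products $\psi_k^{(\alpha_N)}(\mu')\,\psi_\ell^{(\alpha_N)}(\nu')$ (possibly with one factor replaced by a derivative) of the $L^2(\mathbb{R})$-normalised Laguerre functions $\psi_k^{(\alpha_N)} := (k!/\Gamma(k+\alpha_N+1))^{1/2}\sqrt{w}\,L_k^{(\alpha_N)}$, with $k,\ell$ in a bounded window around $m_N$ and coefficients rational in $(\mu',\nu')$ whose only (removable) singularity is at $\mu'=\nu'$; the factorials generated in passing from the $L_k^{(\alpha_N)}$ to the $\psi_k^{(\alpha_N)}$ cancel against $n!\,m!$. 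In the complex case $\Phi_N$ is simply the Christoffel--Darboux kernel $\sum_{k=0}^{m_N}\psi_k^{(\alpha_N)}(\mu')\psi_k^{(\alpha_N)}(\nu')$, whose resummation involves only the indices $m_N,m_N+1$, which is why Theorem~\ref{edge-1} produces the one-pole kernel $\mathbb{A}$; in the real case $\Phi_N$ is the analogous resummation of a $2\times2$ Pfaffian expression, and it is its richer structure --- apparent poles up to order three --- that yields the four-term kernel $\widetilde{\mathbb{A}}$.

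The analytic heart of the matter --- shared with the proof of Theorem~\ref{edge-1} --- is the Plancherel--Rotach/Airy asymptotics of $\psi_{m_N}^{(\alpha_N)}$, of its index-neighbours $\psi_{m_N\pm1}^{(\alpha_N)}$, and of their first $x$-derivatives, at a soft edge $\mu' = \xi N + O(N^{1/3})$ --- that is, $\xi = \xi^*$, or $\xi = \xi_*$ when $\gamma_\infty < 1$ --- in the regime $\alpha_N/n_N \to 1-\gamma_\infty \in {(}0;1{)}$, $m_N = N\gamma_\infty + o(N^{1/3})$. I would obtain these by steepest descent from the classical contour-integral representation of $L_{m_N}^{(\alpha_N)}$: at a spectral edge the two saddle points of the phase coalesce --- which is precisely the condition~\eqref{boundary} --- so the critical point becomes cubic and an Airy function appears, and tracking the constants yields
\[
\psi_{m_N}^{(\alpha_N)}\bigl(\xi N + \hat\mu N^{1/3}\bigr)\;\sim\;c\,N^{-1/3}\,\ai(\mu)\,,\qquad \hat\mu = \xi^{2/3}\gamma_\infty^{-1/6}\mu\,,
\]
with an explicit $c = c(\gamma_\infty) > 0$; this simultaneously fixes the edge scale $N^{1/3}$ and the rescaling constant $\xi^{2/3}\gamma_\infty^{-1/6}$. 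The hypothesis $m_N - N\gamma_\infty = o(N^{1/3})$ is exactly what is needed, since a fluctuation of $m_N$ displaces the true edge $\xi^*(m_N/n_N)\,n_N$ by an amount of order $m_N - N\gamma_\infty$, which has to be negligible on the scale $N^{1/3}$. The index shifts $m_N\mapsto m_N\pm1$ and the $x$-derivatives become $\ai'$-contributions after Taylor expansion in powers of $N^{-1/3}$; for the real case these expansions have to be carried two orders beyond the leading one, because the apparent triple pole of $\widetilde{\mathbb{A}}$ forces the leading and next-to-leading contributions of $\Phi_N$ to cancel, the limit being governed by the third.

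With these asymptotics the conclusion follows by substitution. One multiplies by $\xi\,\gamma_\infty^{-1/2}$ (with the arguments $\pm\hat\mu,\pm\hat\nu$ according to the edge), replaces each $\psi_k^{(\alpha_N)}$-factor by its edge asymptotics, and verifies that $\xi\,\gamma_\infty^{-1/2}$ --- together with the powers of $N$ coming from the edge scaling of the $\psi$'s and from $\mu'-\nu' = \xi^{2/3}\gamma_\infty^{-1/6}(\mu-\nu)N^{1/3}$ --- cancels the entire $N$-dependence, so that $C_N$ supplies the factor $\exp(b^*)$ and the coefficients of $\Phi_N$ turn, after the near-cancellations in the index-shifted differences have been computed to the order dictated by the pole, into exactly the $(x-y)^{-2}$ and $(x-y)^{-3}$ factors of $\widetilde{\mathbb{A}}$. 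The Airy differential equation $\ai''(x) = x\,\ai(x)$ is used along the way --- \eg to recognise $(x+y)\,\ai(x)\ai(y)$ as $\ai''(x)\ai(y) + \ai(x)\ai''(y)$ --- bringing the answer into the form~\eqref{airy2}. At the lower edge one repeats the argument with the saddles coalescing at the other root of~\eqref{boundary}; this is where $\gamma_\infty\ne1$ is used, for when $\gamma_\infty = 1$ one has $\xi_* = 0$, a hard edge at which the coalescing saddles meet a branch point of the integrand, so that the Bessel rather than the Airy kernel would arise. The sign changes $\hat\mu\mapsto-\hat\mu$, $\hat\nu\mapsto-\hat\nu$ merely record that the spectrum lies to the right of $\xi_* N$.

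I expect the main obstacle to be the asymptotic analysis in the second step: proving the Airy asymptotics of the Laguerre polynomials --- and of their derivatives and index-neighbours --- with a parameter $\alpha_N$ of order $N$, with explicit constants and with an error bound uniform enough to absorb the $o(N^{1/3})$ slack permitted in $m_N$; the textbook Plancherel--Rotach asymptotics are stated for $\alpha$ fixed or $\alpha = o(n)$ and do not apply in the form needed here. A secondary, purely mechanical difficulty specific to the real case is the bookkeeping in the third step: because $\widetilde{\mathbb{A}}$ carries an apparent pole of order three, one must expand the several $\psi$-terms of $\Phi_N$ two orders past the leading one and check that the spurious lower-order singularities cancel, leaving precisely~\eqref{airy2}.
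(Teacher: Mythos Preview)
Your route is plausible but differs genuinely from the paper's, and in particular from how Theorem~\ref{edge-1} is actually proved there. The paper does not pass through Laguerre polynomials at all: it extracts $f(n,m;\cdot,\cdot)/(n!\,m!)$ from the generating function of Proposition~\ref{GF} by a contour integral around the origin and analyses that integral directly, using the uniform large-order Bessel asymptotics collected in Section~3. The only change from the complex to the real case is the exponent $1+2/\beta$ in~\eqref{GFF} ($2$ versus $3$), so Theorem~\ref{edge-2} follows by repeating the proof of Theorem~\ref{edge-1} with $(1-z)^2$ replaced by $(1-z)^3$ in the integrand; after localisation near $z=1$ via $z=(1-N^{-1/3})e^{iu/N^{1/3}}$, the extra factor $(1-iu)^{-1}$ (together with a compensating power of $N^{1/3}$, which is why~\eqref{edge-2F} lacks the $N^{1/3}$ prefactor present in~\eqref{edge-1F}) turns the Laplace-type integral producing $\mathbb{A}$ into one producing $\widetilde{\mathbb{A}}$. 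Your programme via Plancherel--Rotach asymptotics of $L_{m_N}^{(\alpha_N)}$ with $\alpha_N\sim(1-\gamma_\infty)N$ could also be made to work and renders the orthogonal-polynomial structure explicit, but two caveats are in order. First, the formula from~\cite{Ko3} on which this paper rests is precisely the generating function~\eqref{GFF}, with the non-Gaussian correction $\exp(b^*z)$ sitting \emph{inside} it, so the clean splitting $f=C_N\cdot\Phi_N$ you posit is not exact but only asymptotic, emerging once the contour has concentrated at $z=1$. Second---and this is the main trade-off---the hard step you correctly flag (Airy asymptotics of Laguerre functions with parameter of order $N$, carried to two subleading orders for the real case) is exactly what the paper's method bypasses: the Hardy--Hille bilinear generating function already packages the entire Laguerre sum into a single $I_{\alpha_N}$, so one uniform Bessel expansion replaces your whole steepest-descent programme, and the passage $\beta=2\to\beta=1$ becomes a one-line change of exponent rather than a structural change from a Christoffel--Darboux kernel to a Pfaffian-type expression.
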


Note that in all the cases, the limit is almost universal 
in that it depends on the underlying distribution only 
via a multiplicative factor containing the fourth cumulant.
Also, note that we obtain the sine kernel in the complex setting
not~only for $\gamma_\infty = 1$ but also for $\gamma_\infty < 1$.

This paper is organized as follows. Section~2 contains
the proofs of Theorems \ref{bulk-1} and \ref{edge-1}.
(Theorems \ref{bulk-2} and \ref{edge-2} may be proved similarly.)
In Section~3 we~state some auxiliary results concerning Bessel functions
which are needed for the proofs.

\bigskip

\section{The Proofs of Theorems \ref{bulk-1} and \ref{edge-1}}

The starting point for the proofs of Theorems \ref{bulk-1} and \ref{edge-1}
will be the following exponential-type generating function
for the second-order correlation function of the characteristic polynomial
of the (unrescaled) sample covariance matrix
satisfying the moment conditions from Section~1:

\begin{proposition}
\label{GF}
For any $\alpha \in \mathbb{N}$, $|z| < 1$,
\begin{align}
\sum_{m=0}^{\infty} \frac{f(m+\alpha,m;\mu,\nu)}{(m+\alpha)! \, m!} \, z^m
=
\frac{\exp \left( -(\mu\!+\!\nu) \frac{z}{1-z} + b^* z \right) \cdot I_\alpha \left( \frac{2 (\mu \nu z)^{1/2}}{1-z} \right)}{(1-z)^{1+(2/\beta)} \cdot ((\mu \nu z)^{1/2})^\alpha} \,,
\label{GFF}
\end{align}
where 
$b^* := 2(b-\tfrac34)$ and $\beta := 2$ in the complex setting,
$b^* := b-3$ and $\beta := 1$ in the real setting,
and $I_\alpha$ denotes the modified Bessel function of order $\alpha$.
\end{proposition}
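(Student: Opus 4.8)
The plan is to compute $f(m+\alpha,m;\mu,\nu)$ explicitly as a polynomial in $\mu$ and $\nu$, and then to sum the resulting coefficients against $z^{m}$. Write $Z$ for the sample covariance matrix ($Z=X^{*}X$ in the complex case, $Z=X^{T}X$ in the real case) and expand the two determinants in (\ref{cf-def}) by the Cauchy--Binet formula,
\[
\det(Z-\mu) \;=\; \sum_{k=0}^{m} (-\mu)^{m-k} \sum_{|S|=|T|=k} \left| \det X_{T,S} \right|^{2} \,,
\]
where $S$ runs over the $k$-element subsets of $\{1,\dots,m\}$, $T$ over the $k$-element subsets of $\{1,\dots,m+\alpha\}$, and $X_{T,S}$ denotes the submatrix of $X$ with rows in $T$ and columns in $S$; and likewise for $\det(Z-\nu)$. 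Substituting into $f(m+\alpha,m;\mu,\nu)=\ee\bigl(\det(Z-\mu)\det(Z-\nu)\bigr)$ expresses $f$ as a sum, over quadruples of index sets, of expectations of products of four minor-determinants, each of which factorizes over the \iid entries $X_{ai}$ once the minors are expanded over bijections.

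The next step is to evaluate these expectations combinatorially. Expanding each minor $\det X_{T,S}$ over the bijections $S\to T$ one sees that, in the resulting monomials, every entry $X_{ai}$ occurs with degree at most $2$ in the $X$'s and at most $2$ in the $\bar X$'s, so only the moments of $Q$ up to order $4$ can contribute, and these enter only through the fourth cumulant of a matrix entry, which works out to be precisely $b^{*}$. I would classify the quadruples of bijections by their overlap pattern: the configurations in which no matrix entry is used more than once produce a Gaussian-type backbone; each coincidence of two entries produces a local ``defect'' carrying the factor $b^{*}$; and, in the real case, the non-vanishing of $\ee X^{2}$ permits an additional class of pairings between the minors, which is what makes the exponent $1+2/\beta$ equal to $3$ rather than $2$.

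Then I would resum. Summed over $m$, the Gaussian backbone should telescope --- by the Christoffel--Darboux identity for Laguerre polynomials --- into $\sum_{k=0}^{m}\tfrac{k!}{(k+\alpha)!}\,L_{k}^{(\alpha)}(\mu)\,L_{k}^{(\alpha)}(\nu)$ in the complex case, so that
\[
\sum_{m \geq 0} z^{m} \sum_{k=0}^{m} \tfrac{k!}{(k+\alpha)!}\,L_{k}^{(\alpha)}(\mu)\,L_{k}^{(\alpha)}(\nu) \;=\; \frac{1}{1-z}\sum_{k \geq 0} \tfrac{k!}{(k+\alpha)!}\,L_{k}^{(\alpha)}(\mu)\,L_{k}^{(\alpha)}(\nu)\,z^{k}\,,
\]
which the Hardy--Hille bilinear generating function for Laguerre polynomials evaluates to
\[
\frac{(\mu\nu z)^{-\alpha/2}}{(1-z)^{2}}\, \exp\!\left( - \frac{(\mu+\nu)z}{1-z} \right) I_{\alpha}\!\left( \frac{2(\mu\nu z)^{1/2}}{1-z} \right) ,
\]
which is the right-hand side of (\ref{GFF}) for $\beta=2$ with $b^{*}=0$ --- consistent, since $b=\tfrac34$ for the complex Gaussian ($Q=N(0,\tfrac12)$) and $b=3$ for the real Gaussian. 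Finally the defects are summed: if each contributes an independent factor $b^{*}$ and consumes one unit of $m$, then summing over their number $d$ and placement multiplies the generating function by $\sum_{d \geq 0}(b^{*}z)^{d}/d! = \exp(b^{*}z)$, giving (\ref{GFF}). The real case should follow by the same scheme, the extra pairings producing the exponent $(1-z)^{-3}$ and the cumulant $b-3$.

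The main obstacle is the combinatorics of the fourfold minor expansion, together with the exponentiation. Two points need care. First, the terms involving the third moment of $Q$, which a priori occur (as $Q$ is not assumed symmetric), must cancel after antisymmetrizing the determinant expansions, leaving a dependence on $Q$ only through $b$. Second, one must show that the fourth-cumulant defects genuinely decouple from the Gaussian backbone and from one another, so that their sum over number and position really collapses to the single factor $\exp(b^{*}z)$, uniformly in $m$ and $\alpha$. Arranging the ranges of the index-set sums and the determinant signs so that the Christoffel--Darboux telescoping and the Hardy--Hille summation are applicable is where most of the remaining work lies.
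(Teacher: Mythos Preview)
The paper does not actually prove this proposition: it is stated as the ``starting point'' for the proofs of Theorems~\ref{bulk-1} and~\ref{edge-1} and is taken over from the companion paper~\cite{Ko3} (which treats fixed~$\alpha$, but the identity is the same for every $\alpha\in\mathbb N$). So there is no proof in the present paper to compare against; one can only assess your sketch on its own merits.

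The Gaussian backbone you describe is correct. In the complex case the Br\'ezin--Hikami/Strahov--Fyodorov identity for two characteristic polynomials of a unitary-invariant ensemble, combined with Christoffel--Darboux for the monic Laguerre polynomials $\pi_k=(-1)^k k!\,L_k^{(\alpha)}$ (with $h_k=k!(k+\alpha)!$), gives exactly
\[
\frac{f_{\mathrm{Gauss}}(m+\alpha,m;\mu,\nu)}{(m+\alpha)!\,m!}=\sum_{k=0}^{m}\frac{k!}{(k+\alpha)!}\,L_k^{(\alpha)}(\mu)L_k^{(\alpha)}(\nu),
\]
and the Hardy--Hille formula then yields the right-hand side of~\eqref{GFF} with $b^*=0$, as you say. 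This part of your argument is solid.

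What remains is precisely what you flag as ``the main obstacle'', and here the proposal is still only a heuristic. Multiplying the generating function by $e^{b^*z}$ is equivalent to the exact combinatorial identity
\[
\frac{f(m+\alpha,m;\mu,\nu)}{(m+\alpha)!\,m!}=\sum_{d=0}^{m}\frac{(b^*)^{d}}{d!}\,\frac{f_{\mathrm{Gauss}}(m-d+\alpha,m-d;\mu,\nu)}{(m-d+\alpha)!\,(m-d)!}\,,
\]
and your sentence ``each defect contributes an independent factor $b^*$ and consumes one unit of $m$'' is a restatement of this identity, not a proof of it. Establishing it requires a genuine bijection between configurations with $d$ fourth-cumulant coincidences at size $m$ and defect-free configurations at size $m-d$, together with a careful count of the $\frac{(m+\alpha)!\,m!}{(m-d+\alpha)!\,(m-d)!\,d!}$ ways to place the defects; the determinant signs and the overlap constraints among the four index sets $S,T,S',T'$ make this delicate. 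The same applies to the third-moment cancellation (which in the complex case involves $\ee[|X|^2 X]\ne 0$ when $Q$ is asymmetric) and to the extra $(1-z)^{-1}$ in the real case coming from the pairings allowed by $\ee[X^2]\ne 0$: both are correctly identified as issues, but neither is argued. In short, your plan is the right one and matches what \cite{Ko3} does, but the proposal stops exactly where the work begins.
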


Note that $(w/2)^{-\alpha} I_\alpha(w)$ is an even entire function of $w$, as follows 
directly from the power series representation of the modified Bessel function.
Thus, the right-hand side in \eqref{GFF} extends to a meromorphic function 
on the complex plane, with a singularity at $z=1$. In the subsequent calculations, 
we will always consider the principal branch of $I_\alpha(w)$.

We only give the proofs for the complex setting,
the proofs for the real setting being very similar.

We will always assume that $\alpha_N := n_N - m_N \to \infty$
and $\gamma_N := m_N / n_N \to \gamma_\infty$
for~some constant $\gamma_\infty \in {]}0,1{]}$.
(The case where $\alpha_N$ remains bounded is already con\-sidered in \cite{Ko3}.)
Furthermore, we will usually omit the index $N$ of the parameters,
and also of several auxiliary functions defined below.
By Proposition \ref{GF}, we have the~integral representation
\begin{align}
\label{intrep}
\frac{f(n,m;\mu,\nu)}{n! \, m!}
=
\frac{1}{2 \pi i} \int_\sigma
\frac{\exp \left( -(\mu+\nu) \frac{z}{1-z} + b^* z \right) \cdot I_\alpha \left( \frac{2 (\mu \nu z)^{1/2}}{1-z} \right)}{(1-z)^2 \cdot ((\mu \nu z)^{1/2})^\alpha}
 \ \frac{dz}{z^{m+1}} \,,
\end{align}
where $\sigma$ denotes a contour around the origin.
Similarly as in \cite{Ko3}, we will show that 
the main contribution to the contour integral in \eqref{intrep} 
comes from a small neighborhood of the point $z_0 = 1$.
To determine the asymptotic behavior of the values $f(n_N,m_N,\mu_N,\nu_N)$
in Theorems \ref{bulk-1} and \ref{edge-1}, we proceed similarly as in \cite{Ko3},
but we have to address some additional complications arising from the circumstance
that $\alpha_N \to \infty$.
To deal with the this problem, we will use various well-known 
\emph{uniform asymptotics} for the modified Bessel function
of large positive order (see \eg Chapters 10~and~11 in \textsc{Olver} \cite{O}).
For convenience, we state these results (and several bounds deduced therefrom)
in Section~3 of this paper.

We will choose the contour $\sigma = \sigma_N$ consisting 
of the following parts $\sigma_i = \sigma_{N,i}$, $i=-2,\hdots,+2$
(see~Figure~\ref{newcontour}):
\begin{itemize}
\item $\sigma_{-2}(t) := \exp(it)$, $-\pi \leq t \leq -ia/N^{\eta}$,
\item $\sigma_{-1}(t) := (1-t/N^{\eta}) \exp(-ia/N^{\eta})$, $0 \leq t \leq 1$,
\item $\sigma_{ 0}(t) := (1-1/N^{\eta}) \exp(it)$, $-ia/N^{\eta} \leq t \leq ia/N^{\eta}$,
\item $\sigma_{+1}(t) := (1+t/N^{\eta}) \exp(ia/N^{\eta})$, $-1 \leq t \leq 0$,
\item $\sigma_{+2}(t) := \exp(it)$, $ia/N^{\eta} \leq t \leq \pi$,
\end{itemize} 
Here $a > 0$ is a positive real number which will finally be chosen sufficiently large,
and $\eta := 1$ for the bulk of the spectrum and $\eta := 1/3$ for the edge of the spectrum.

\begin{figure}
\begin{center}
\includegraphics[width=6.6cm]{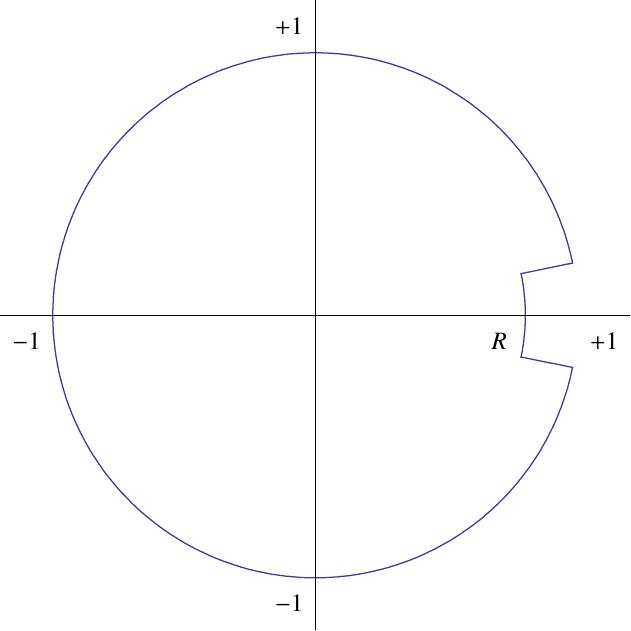}
\end{center}
\caption{The contour $\sigma$.}
\label{newcontour}
\end{figure}

The rough idea behind the choice of this contour is the following:
After the appropriate rescaling, the main contribution
to the integral \eqref{intrep} comes from the part $\sigma_0$,
for which we can use the uniform asymptotics \eqref{bessel-u1}
for the modified Bessel function in the positive half-plane.
The other parts are essentially error terms, 
which can be made arbitrary small by picking $a$ and $N$ sufficiently large.
However, to prove this, we need rather precise approximations and bounds
for the modified Bessel function close to the imaginary axis,
and this is our main motivation for the choice of the above contour.
For the contours $\sigma_{-1}$ and $\sigma_{+1}$,
we can use the uniform bound \eqref{bessel-u2}
for the modified Bessel function \emph{near} the imaginary axis,
but away from the turning points $\pm i$.
For the contours $\sigma_{-2}$ and $\sigma_{+2}$,
we can use the uniform bounds \eqref{jbound-1} and \eqref{jbound-2}
for the modified Bessel function \emph{on} the imaginary axis,
including the vicinity of the turning points $\pm i$.

In the following proofs, we adopt the convention that all asymptotic bounds 
may depend on the sequence $(\alpha_N)_{N \in \mathbb{N}}$ as well as 
on the ``shift parameters'' $\xi,\mu,\nu$, unless otherwise indicated.
A similar remark applies to large positive constants,
typically denoted by $C$.


After these comments, we turn to the proofs of the main theorems.

\begin{proof}[Proof of Theorem \ref{bulk-1}]
Let $\xi,\mu,\nu$ be as in Theorem \ref{bulk-1}, and let $a > 1$.
We~assume throughout this proof that $N$ is sufficiently large.
By (\ref{intrep}) and the definition of $Z_N(\xi,\mu,\nu)$, we have
\begin{align}
\label{intrep11}
Z_N(\xi,\mu,\nu) \cdot \frac{f(n,m;N\xi+\mu,N\xi+\nu)}{n! \, m!} =
\frac{1}{2 \pi i} \int_\sigma
h(z)
\ \frac{dz}{z} \,, 
\end{align}
where
$$
h(z) = h_N(z) := \frac{\exp \left( -(N\xi+\tfrac12(\mu+\nu)) \frac{1+z}{1-z} + b^*z \right) \cdot I_\alpha \left( w(z) \right)}{(1-z)^2 \cdot z^{(n+m)/2}} \,,
$$
$$
w(z) = w_N(z) := 2 \sqrt{N^2 \xi^2 + N \xi (\mu + \nu) + \mu \nu} \cdot \frac{\sqrt{z}}{1-z} \,,
$$
and $\sigma = \sigma_N$ is the contour specified above, with $\eta := 1$.
Note that $\sigma$ implicitly depends on $a$.

\pagebreak[2]

For any $a \in [1,\infty]$, let
$$
S(a) := \frac{\exp(b^*)}{4\pi^{3/2}\xi^{1/2}} \int_{-a}^{+a} \frac{\exp \left( h_\infty(\xi) (1-iu) - \tfrac14 (\mu-\nu)^2 \xi^{-1} (1-iu)^{-1} \right)}{(1-iu)^{3/2}} \, du \,,
$$
where $h_\infty(\xi) := -\tfrac14 \xi + \tfrac12 (1+\gamma_\infty) - \tfrac14 (1-\gamma_\infty)^2 \xi^{-1}$.
Note that by~\eqref{boundary} we have $h_\infty(\xi) > 0$ for $\xi \in [\xi_*,\xi^*]$.
We will show that for any $\delta > 0$, there exists some $a > 1$ 
such that the following holds:
\begin{align}
\label{claim11}
\lim_{N \to \infty}
\frac{1}{2 \pi i} \int_{\sigma_{0}} h(z) \ \frac{dz}{z} = S(a) \,,
\end{align}
\begin{align}
\label{claim12}
\limsup_{N \to \infty}
\left| \int_{\sigma_{1}} h(z) \ \frac{dz}{z} \right| \leq \delta \,,
\end{align}
\begin{align}
\label{claim13}
\limsup_{N \to \infty}
\left| \int_{\sigma_{2}} h(z) \ \frac{dz}{z} \right| \leq \delta \,.
\end{align}
Then, by symmetry, similar bounds hold for the integrals along $\sigma_{-1}$ and $\sigma_{-2}$.
Using these results, it is easy to see that
$$
\lim_{N \to \infty}
\frac{1}{2 \pi i} \int_{\sigma} h(z) \ \frac{dz}{z} = S(\infty) \,.
$$
Now, by Laplace inversion, we have, for $t > 0$, $a \in \mathbb{R}$,
$$
\frac{1}{2\pi i} \int_{1-i\infty}^{1+i\infty} e^{tz} \, \frac{e^{-a^2/4z}}{z^{3/2}} \ dz 
= 
\frac{2 \sin (a \sqrt{t})}{\sqrt{\pi} a}
$$
(see \eg p.\,245 in \cite{BMP}).
We therefore get
$$
S(\infty) = \tfrac{1}{\pi} \, \exp(b^*) \, \left( h_\infty(\xi) / \xi \right)^{1/2} \cdot \frac{\sin \left( (\mu-\nu) \left( h_\infty(\xi)/\xi \right)^{1/2} \right)}{\left( (\mu-\nu) \left( h_\infty(\xi)/\xi \right)^{1/2} \right)} \,.
$$
Now replace the local shift parameters $\mu,\nu$ 
with $\mu/(\gamma_\infty g(\xi)),\nu/(\gamma_\infty g(\xi))$ and \linebreak
observe that $\left( h_\infty(\xi)/\xi \right)^{1/2} = \pi \gamma_\infty g(\xi)$
(which is a simple consequence of \eqref{boundary}) to obtain \eqref{bulk-1F}.

\pagebreak[2]
\medskip

To prove \eqref{claim11} -- \eqref{claim13}, we write
$$
h(z) = \frac{h_1(z) h_2(z)}{(1-z)^{2} \, z^{(n+m)/2}} \,,
$$
where
$$
h_1(z) := \exp \left( -(N\xi + \tfrac12(\mu+\nu)) \frac{1+z}{1-z} + b^*z \right) \cdot \exp \left( + w(z) \right)
$$
and
$$
h_2(z) := \exp \left( - w(z) \right) \cdot I_\alpha \left( w(z) \right) \,.
$$

\pagebreak[2]

\noindent{}Since
$$
w(z) = 2 \left( N \xi + \tfrac12 (\mu + \nu) - \tfrac18 (\mu - \nu)^2 / N \xi + \mathcal{O}(1/N^2) \right) \frac{\sqrt{z}}{1-z} \,,
$$
a simple calculation yields
\begin{multline*}
h_1(z) = \exp \bigg( -N\xi \frac{1-\sqrt{z}}{1+\sqrt{z}} - \tfrac12(\mu+\nu) \frac{1-\sqrt{z}}{1+\sqrt{z}} - \tfrac14 (\mu-\nu)^2 / (N\xi) \, \frac{\sqrt{z}}{1-z} + b^*z \\ \,+\, \frac{\sqrt{z}}{1-z} \, \myo \big( 1/N^2 \big) \bigg) \,.
\end{multline*}
(Recall our convention that implicit constants in $\myo$-terms 
may depend on $(\alpha_N)$, $\xi,\mu,\nu$, which are regarded as fixed).

\medskip

We first prove \eqref{claim11}. In doing so, we use the notation $\myo_a$ 
to denote a bound involving an implicit constant depending also on $a$ 
(in addition to $(\alpha_N)$, $\xi,\mu,\nu$). Let $R = R_N := (1-1/N)$.
Substituting $z = Re^{it}$ and $t = u/N$ in the integral 
on the left-hand side in \eqref{claim11}, we obtain
\begin{align}
\label{step11a}
\frac{1}{2 \pi N} \int_{-a}^{+a}
\frac{h_1(Re^{iu/N}) \, h_2(Re^{iu/N})}{(1-Re^{iu/N})^2 \cdot (Re^{iu/N})^{(n+m)/2}}
 \ du \,.
\end{align}
Now, using Taylor expansion, we have the following approximations, 
for $|u| \leq a$:
\begin{align*}
h_1(Re^{iu/N}) &= \exp \big( -\tfrac14\xi(1-iu) - \tfrac14(\mu-\nu)^2 \xi^{-1} / (1-iu) + b^* \big) \left( 1 + o_a(1) \right) \,,
\\
h_2(Re^{iu/N}) &= \frac{\exp\left( - \tfrac14 (1-\gamma_\infty)^2 \xi^{-1} (1-iu) \right) }{\sqrt{4\pi N^2 \xi / (1-iu) }} (1 + o_a(1)) \,,
\\
(1-Re^{iu/N})^2 &= (1-iu)^2/N^2 \left( 1 + o_a(1) \right) \,,
\\
(Re^{iu/N})^{(m+n)/2} &= \exp \big( -\tfrac12 (1-iu) (1+\gamma_\infty) \big) \left( 1 + o_a(1) \right) \,.
\end{align*}
For the second approximation, we have also used the uniform asymptotics \eqref{bessel-u1}
for the modified Bessel function. Indeed, since
$$
\alpha_N = \left( 1 - \gamma_\infty + o(1) \right) N \,,
$$
$$
w(Re^{iu/N}) = 2N^2 \xi / (1-iu) \left( 1 + o_a(1) \right) \,,
$$
and
\begin{align}
\label{etaex}
\sqrt{1+z^2} + \log \frac{z}{1+\sqrt{1+z^2}} = z - \tfrac{1}{2} z^{-1} + \myo(z^{-3})
\end{align}
for $|z| \to \infty$, $|\arg z| \leq \tfrac12\pi$, it follows from \eqref{bessel-u1} that
\begin{align*}
   h_2(Re^{iu/N})   
&= \exp(-w(Re^{iu/N})) \cdot I_\alpha(w(Re^{iu/N})) \\
&= \frac{\exp \left( - \tfrac12\alpha^2/w(Re^{iu/N}) \right)}{\sqrt{2\pi w(Re^{iu/N})}} \left( 1 + o_a(1) \right) \,,
\end{align*}
whence the approximation for $h_2(Re^{iu/N})$.
Inserting the above approximations into~\eqref{step11a} 
and recalling the definition of $S(a)$ yields \eqref{claim11}.

\medskip

We now prove \eqref{claim12} and \eqref{claim13}. 
For $z \in \text{range}\,(\sigma_1) \cup \text{range}\,(\sigma_2)$, 
write $\sqrt{z} = re^{i\varphi}$ with $r \in [(1-1/N)^{1/2};1]$, $\varphi \in [a/2N;\pi/2]$.
Since $a > 1$, we then have the estimates 
\begin{align}
\label{bound1a}
  \left| \re\!\left( \frac{1-\sqrt{z}}{1+\sqrt{z}} \right) \right|
= \frac{1-r^2}{1+2r\cos\varphi+r^2}
\leq 1/N \,,
\end{align}
\begin{align}
\label{bound1b}
  \left| \frac{\sqrt{z}}{1-z} \right|
= \frac{r}{|1-r^2e^{2i\varphi}|}
\leq \frac{1}{|r^2-r^2e^{2i\varphi}|}
= \myo(N) \,,
\end{align}
\begin{align}
\label{bound1c}
  \re \left( \frac{\sqrt{z}}{1-z} \right)
= \frac{(r-r^3)\cos\varphi}{1-2r^2\cos2\varphi+r^4}
\geq 0 \,,
\end{align}
\begin{align}
\label{bound1d}
  \im \left( \frac{\sqrt{z}}{1-z} \right)
= \frac{(r+r^3)\sin\varphi}{1-2r^2\cos2\varphi+r^4}
\geq 0 \,.
\end{align}
In particular, \eqref{bound1a} and \eqref{bound1b} show that $h_1(z)$ is uniformly bounded.
Moreover,
$$
|z^{-(n+m)/2}| \leq (1+2/N)^{(n+m)/2} = (1+2/N)^{\frac12(1+\gamma_\infty+o(1))N} = \myo(1) \,.
$$
The proof of \eqref{claim12} and \eqref{claim13} is therefore reduced to showing
that for any $\delta > 0$, there exists some $a > 1$ such that
\begin{align}
\label{claim14}
\limsup_{N \to \infty}
\left| \int_{\sigma_{1}} \frac{h_2(z)}{(1-z)^2} \ \frac{dz}{z} \right| \leq \delta \,,
\qquad
\limsup_{N \to \infty}
\left| \int_{\sigma_{2}} \frac{h_2(z)}{(1-z)^2} \ \frac{dz}{z} \right| \leq \delta \,.
\end{align}

For the first integral in \eqref{claim14}, 
note that for $t \in [0;1]$ and fixed $a > 1$, 
we have 
$$
\alpha_N = \left( 1 - \gamma_\infty + o(1) \right) N \,,
$$
and
$$
w \left( (1-t/N) \, e^{ia/N} \right) = 2N^2 \xi / (t-ia) \left( 1 + o_a(1) \right) \,.
$$
Writing $z = (1-t/N) \, e^{ia/N}$ with $t \in [0;1]$ and using \eqref{etaex}, \eqref{bound1c} 
as well as \linebreak the uniform bound \eqref{bessel-u3} for the modified Bessel function, 
it therefore follows that for all sufficiently large $N \in \mathbb{N}$ (depending on $a$),
\begin{align*}
   \left| h_2(z) \right|  
 = \left| \exp(-w(z)) \cdot I_\alpha(w(z)) \right|
&\leq \frac{C \left| \exp \left( - \tfrac12\alpha^2/w(z) \right) \right|}{\sqrt{2\pi \left| w(z) \right|}} \left( 1 + o_a(1) \right) \\
&\leq \frac{C (1+a^2)^{1/4}}{\sqrt{4 \pi N^2 \xi}} \left( 1 + o_a(1) \right) \,,
\end{align*}
where $C$ is a positive constant which does not depend on $a$
and which may change from step to step in the sequel.
Combining this with the estimates
$|1-z| \geq a/N (1+o_a(1))$, $|z| \geq 1-1/N$,
it follows that
$$
\limsup_{N \to \infty} \left| \int_{\sigma_{1}} \frac{h_2(z)}{(1-z)^2} \ \frac{dz}{z} \right| \leq \frac{C (1+a^2)^{1/4}}{\sqrt{4\pi\xi} \, a^2} \,.
$$
Since for $a > 1$ sufficiently large, the right-hand side is clearly
bounded above by~$\delta$, this proves \eqref{claim12}.

\pagebreak[2]

For the second integral in \eqref{claim14}, note that for $|z| = 1$, 
we have $\re w(z) = 0$ and $\im w(z) \geq 0$ 
from \eqref{bound1c} and \eqref{bound1d}.
Thus, we may use the bounds \eqref{jbound-1a}~and~\eqref{jbound-2a}
for the modified Bessel function on the imaginary axis.
\pagebreak[2]
Writing $z = e^{it}$ with $t \in [1/N;\pi]$, 
we have, for all sufficiently large $N \in \mathbb{N}$,
$$
\big| w_N(z) \big| = 2 \sqrt{N^2\xi^2+N\xi(\mu+\nu)+\mu\nu} \left| \frac{\sqrt{z}}{1-z} \right| \ge \frac{N\xi}{|1-e^{it}|}
$$
and
$$
\big| w_N(z) / \alpha_N \big| \ge \frac{\xi}{(1-\gamma_N)|1-e^{it}|} \geq \frac{\xi}{|1-e^{it}|} \geq \frac{\xi}{t} \,.
$$
Hence, by \eqref{jbound-1a} and \eqref{jbound-2a},
for all sufficiently large $N \in \mathbb{N}$,
$$
|h_2(e^{it})| 
\leq 
\begin{cases}
C \, |1-e^{it}|^{1/2} \,/\, N^{1/2} & ; \ |t| \leq (\log N)^{-1} \,, \\
C \, |1-e^{it}|^{1/3} \,/\, N^{1/3} & ; \ |t| \geq (\log N)^{-1} \,, \\
\end{cases}
$$
where $C$ is a positive constant which depends only on $\xi$ 
and which may change from occurrence to occurrence in the sequel. 
It follows that
\begin{align*}
\bigg| \int_{\sigma_2} & \frac{h_2(z)}{(1-z)^2} \, \frac{dz}{z} \bigg| = \bigg| \int_{a/N}^{\pi} \frac{h_2(e^{it})}{(1-e^{it})^2} \, dt \bigg| \\
& \leq
  \frac{C}{N^{1/2}} \int_{a/N}^{(\log N)^{-1}} \frac{1}{|1-e^{it}|^{3/2}} \ dt
+ \frac{C}{N^{1/3}} \int_{(\log N)^{-1}}^{\pi} \frac{1}{|1-e^{it}|^{5/3}} \ dz \\
& \leq
  \frac{C}{N^{1/2}} \int_{a/N}^{(\log N)^{-1}} \frac{1}{t^{3/2}} \ dt
+ \frac{C}{N^{1/3}} \int_{(\log N)^{-1}}^{\pi} \frac{1}{t^{5/3}} \ dz \\
& \leq 
  \frac{C}{N^{1/2}} \, (N/a)^{1/2}
+ \frac{C}{N^{1/3}} \, (\log N)^{2/3} \,.
\end{align*}
Since for $a > 1$ sufficiently large, this is clearly $\leq\delta$ for all sufficiently large $N \in \mathbb{N}$,
this proves \eqref{claim13}.

\medskip

The proof of Theorem \ref{bulk-1} is complete now.
\end{proof}


\begin{proof}[Proof of Theorem \ref{edge-1}]
Let $\xi \in \{ \xi^*,\xi_* \}$, $\mu,\nu \in \mathbb{R}$, and $a > 1$.
When we~use the~symbols $\pm$ and $\mp$, we mean 
the upper sign for~$\xi = \xi^*$ and the lower sign for~$\xi = \xi_*$.
We~assume throughout this proof that $N$ is sufficiently large.
Again, by~(\ref{intrep}) and the definition of $Z_N(\xi,\mu,\nu)$, we have
\begin{align}
\label{intrep21}
N^{1/3} \, Z_N(\xi,\pm\mu,\pm\nu) \cdot \frac{f(n,m;\xi N\pm\mu N^{1/3},\xi N\pm\nu N^{1/3})}{n! \, m!} 
= 
\frac{N^{1/3}}{2 \pi i} \int_\sigma h(z) \ \frac{dz}{z} \,,
\end{align}
where
$$
h(z) = h_N(z) := \frac{\exp \left( -(\xi N\pm\tfrac12(\mu+\nu)N^{1/3}) \frac{1+z}{1-z} + b^*z \right) \cdot I_\alpha \left( w(z) \right)}{(1-z)^2 \cdot z^{(n+m)/2}} \,,
$$
$$
w(z) = w_N(z) := 2 \sqrt{\xi^2 N^2 \pm (\mu+\nu) \xi N^{4/3} + \mu \nu N^{2/3}} \cdot \frac{\sqrt{z}}{1-z} \,,
$$
and $\sigma = \sigma_N$ is the contour specified above, with $\eta := 1/3$.
Note that $\sigma$ implicitly depends on $a$.

\pagebreak[2]

For any $a \in [1,\infty]$, let
$$
A(a) := \frac{\exp(b^*)}{4\pi^{3/2}\xi^{1/2}} \int_{-a}^{+a} \frac{\exp \left( ( \tfrac{1}{12} \gamma_\infty (1\!-\!iu)^3 \!-\! \tfrac{1}{2} (\mu\!+\!\nu) \sqrt{\gamma_\infty} (1\!-\!iu) \!-\! \tfrac{1}{4} (\mu\!-\!\nu)^2 (1\!-\!iu)^{-1} ) / \xi \right)}{(1\!-\!iu)^{3/2}} \, du \,.
$$
We will show that for any $\delta > 0$, there exists some $a > 1$ such that the following holds:
\begin{align}
\label{claim21}
\lim_{n \to \infty}
\frac{N^{1/3}}{2 \pi i} \int_{\sigma_{0}} h(z) \ \frac{dz}{z} = A(a) \,,
\end{align}
\begin{align}
\label{claim22}
\limsup_{n \to \infty} \left| N^{1/3} \int_{\sigma_{1}} h(z) \ \frac{dz}{z} \right| \leq \delta \,,
\end{align}
\begin{align}
\label{claim23}
\limsup_{n \to \infty} \left| N^{1/3} \int_{\sigma_{2}} h(z) \ \frac{dz}{z} \right| \leq \delta \,.
\end{align}
Then, by symmetry, similar bounds hold for the integrals along $\sigma_{-1}$ and $\sigma_{-2}$.
Using these results, it is easy to see that
$$
\lim_{N \to \infty}
\frac{1}{2 \pi i} \int_{\sigma} h(z) \ \frac{dz}{z} = A(\infty) \,.
$$
Substituting $z = (\xi/\gamma_\infty)^{1/3} z$, $dz = (\xi/\gamma_\infty)^{1/3} dz$, 
and shifting the path of integration back to the line $1 + i\mathbb{R}$
(which is easily justified by Cauchy's theorem), we obtain
$$
A(\infty) = \frac{\gamma_\infty^{1/6}\exp(b^*)}{4\pi^{3/2}\xi^{2/3} i} \int_{1-i\infty}^{1+i\infty} \frac{\exp \left( \tfrac{1}{12} z^3 - \tfrac{1}{2}(\mu+\nu) \gamma_\infty^{1/6} \xi^{-2/3} z - \tfrac{1}{4}(\mu-\nu)^2 \gamma_\infty^{1/3} \xi^{-4/3} z^{-1} \right)}{z^{3/2}} \, dz \,.
$$
Making the replacements $\mu \mapsto (\xi^{2/3}/\gamma_\infty^{1/6}) \mu$, $\nu \mapsto (\xi^{2/3}/\gamma_\infty^{1/6}) \nu$
and multiplying by~$(\xi^{2/3}/\gamma_\infty^{1/6})$, we further obtain
$$
(\xi^{2/3}/\gamma_\infty^{1/6}) \, A(\infty) = \frac{\exp(b^*)}{4\pi^{3/2}i} \int_{1-i\infty}^{1+i\infty} \frac{\exp \left( \tfrac{1}{12}z^3 - \tfrac{1}{2}(\mu+\nu) - \tfrac{1}{4}(\mu-\nu)^2 z^{-1} \right)}{z^{3/2}} \, dz \,.
$$
By the integral representation for the Airy kernel 
(see \eg Proposition 2.2 in \cite{Ko2}), the latter expression
is equal to $\exp(b^*) \, \mathbb{A}(\mu,\nu)$,
whence \eqref{edge-1F} and \eqref{edge-1G}.

\pagebreak[2]
\medskip

To prove \eqref{claim21} -- \eqref{claim23}, we proceed similarly as in the preceding proof.
To begin with, we write
$$
h(z) = \frac{h_1(z) \, h_2(z)}{(1-z)^2 \, z^{(n+m)/2}} \,,
$$
where
$$
h_1(z) := \exp \left( - \big( \xi N \pm \tfrac12(\mu+\nu)N^{1/3} \big) \frac{1+z}{1-z} + b^*z \right) \cdot \exp \left( + w(z) \right)
$$
and
$$
h_2(z) := \exp \left( - w(z) \right) \cdot I_\alpha \left( w(z) \right) \,.
$$
Similarly as above, we then have
\begin{multline*}
h_1(z)
= 
\exp \bigg( - \xi N \frac{1-\sqrt{z}}{1+\sqrt{z}} \mp \tfrac12(\mu+\nu) N^{1/3} \frac{1-\sqrt{z}}{1+\sqrt{z}} - \tfrac14 (\mu-\nu)^2 \xi^{-1} N^{-1/3} \frac{\sqrt{z}}{1-z} \\ \,+\, b^* z + \frac{\sqrt{z}}{1-z} \, \myo \big( 1/N \big) \bigg) \,.
\end{multline*}
Starting from this representation, the proof is by~and~large similar to the preceding proof. 
However, a notable difference is given by the fact the leading-order terms cancel out now, 
which is why we have to keep track of several additional terms in the asymptotic approximations.

\medskip

We begin with the contour integral for $\sigma_0$.
Let $R = R_N := (1-1/N^{1/3})$.
Substituting $z = Re^{it}$ and $t = u/N^{1/3}$ in the integral
on the left-hand side in~\eqref{claim21}, we obtain
\begin{align}
\label{step21a}
\frac{1}{2 \pi} \int_{-a}^{+a}
\frac{h_1(Re^{iu/N^{1/3}}) \, h_2(Re^{iu/N^{1/3}})}{(1-Re^{iu/N^{1/3}})^2 \cdot (Re^{iu/N^{1/3}})^{(n+m)/2}}
 \ du \,.
\end{align}
Similarly as in the preceding proof, we use the notation $\myo_a$ 
to denote a bound involving an implicit constant depending also on $a$ 
(in addition to $(\alpha_N)$, $\xi,\mu,\nu$).
Then, putting $\hat\gamma_\infty := 1 - \gamma_\infty$ for abbreviation
and using Taylor expansion, we have the approximations, for $|u| \leq a$,
\begin{align*}
h_1(Re^{iu/N^{1/3}})
&= 
\exp \Big( -\tfrac{1}{4} (1-iu) \xi N^{2/3} - \tfrac{1}{8} \xi N^{1/3} - \tfrac{1}{12} \xi + \tfrac{1}{192} (1-iu)^3 \xi \\
&\qquad \mp \tfrac{1}{8} (\mu+\nu) (1-iu) - \tfrac{1}{4} (\mu-\nu)^2 \xi^{-1} (1-iu)^{-1} + b^* \Big) \big( 1+o_a(1) \big) \,,
\\
h_2(Re^{iu/N^{1/3}})
&= 
\frac{1}{\sqrt{4\pi N^{4/3} \xi/(1-iu)}}
\cdot
\exp 
\Big(
    - \tfrac{1}{4} \hat\gamma_\infty^2 (1-iu) \xi^{-1} N^{2/3} - \tfrac{1}{8} \hat\gamma_\infty^2 \xi^{-1} N^{1/3} - \tfrac{1}{12} \hat\gamma_\infty^2 \xi^{-1} \\
    & \qquad - \tfrac{1}{96} \hat\gamma_\infty^2 (1-iu)^3 \xi^{-1} \pm \tfrac{1}{8} \hat\gamma_\infty^2 (\mu+\nu) (1-iu) \xi^{-2} + \tfrac{1}{192} \hat\gamma_\infty^4 (1-iu)^3 \xi^{-3} 
\Big)
\cdot
\big( 1+o_a(1) \big) \,,
\\
   (1-Re^{iu/N^{1/3}})^{2}
&= (1-iu)^2/N^{2/3} \big( 1+o_a(1) \big) \,,
\\
  (Re^{iu/N^{1/3}})^{(m+n)/2}
&= \exp \left( -\tfrac12(1+\gamma_\infty) \big( (1-iu)N^{2/3} + \tfrac12 N^{1/3} + \tfrac13 \big) \right) \big( 1+o_a(1) \big) \,.
\end{align*}

For the second approximation, we have used the uniform asymptotics \eqref{bessel-u1}
for the modified Bessel function. Indeed, since
$$
\alpha_N = \left( 1 - \gamma_\infty \right) N + o(N^{1/3}) \,,
$$
$$
w(Re^{iu/N^{1/3}}) = 2N^{4/3} \xi / (1-iu) \left( 1 + o_a(1) \right) \,,
$$
and
$$
\sqrt{1+z^2} + \log \frac{z}{1+\sqrt{1+z^2}} = z - \tfrac{1}{2} z^{-1} + \tfrac{1}{24} z^{-3} + \myo(z^{-5})
$$
for $|z| \to \infty$, $|\arg z| \leq \tfrac12\pi$, it follows from \eqref{bessel-u1} that
\begin{align*}
   h_2(Re^{iu/N^{1/3}}) 
&= \exp(-w(Re^{iu/N^{1/3}})) \cdot I_\alpha(w(Re^{iu/N^{1/3}})) \\ 
&= \frac{\exp \left( - \tfrac12\alpha^2/w(Re^{iu/N^{1/3}}) + \tfrac{1}{24}\alpha^4/w^3(Re^{iu/N^{1/3}}) \right)}{\sqrt{2\pi w(Re^{iu/N^{1/3}})}} \left( 1 + o_a(1) \right) \,.
\end{align*}
Now, by straightforward expansion,
\begin{align*}
   \frac{1}{w(\gamma(u/N^{1/3}))}
&= \tfrac12 (1-iu) \xi^{-1} N^{-4/3} + \tfrac14 \xi^{-1} N^{-5/3} + \tfrac16 \xi^{-1} N^{-2} \\&\quad\,+\, \tfrac{1}{48} (1-iu)^3 \xi^{-1} N^{-2} \mp \tfrac{1}{4} (\mu+\nu) (1-iu) \xi^{-2} N^{-2} + \myo_a(N^{-7/3}) \,.
\end{align*}
Plugging this into the preceding expression and using once again 
the assumption $\alpha = (1-\gamma_\infty)N + o(N^{1/3})$ yields 
the approximation for $h_2(Re^{iu/N^{1/3}})$.

Inserting the above approximations into \eqref{step21a}
and ordering with respect to fractional powers of $N$, we obtain
$$
\frac{\exp(b^*)}{4 \pi^{3/2}\xi^{1/2}} \int_{-a}^{+a}
\frac{\exp(T)}{(1-iu)^{3/2}} \, (1+o_a(1))
\ dt
$$
with
\begin{align*}
T = 
+ &\Big( - \tfrac14(1-iu)\xi + \tfrac12(1+\gamma_\infty)(1-iu) - \tfrac14\hat\gamma_\infty^2(1-iu)\xi^{-1} \Big) N^{2/3}
\\ 
+ &\Big( - \tfrac18\xi + \tfrac14(1+\gamma_\infty) - \tfrac18\hat\gamma_\infty^2\xi^{-1} \Big) N^{1/3}
\\
+ &\Big( - \tfrac{1}{12}\xi + \tfrac16(1+\gamma_\infty) - \tfrac{1}{12}\hat\gamma_\infty^2\xi^{-1} \Big)
\\
+ &\tfrac{1}{192}(1-iu)^3\xi \mp \tfrac18(\mu+\nu)(1-iu) - \tfrac14(\mu-\nu)^2\xi^{-1}(1-iu)^{-1} \\
- &\tfrac{1}{96}\hat\gamma_\infty^2(1-iu)^3\xi^{-1} \pm \tfrac18\hat\gamma_\infty^2(\mu+\nu)(1-iu)\xi^{-2} + \tfrac{1}{192}\hat\gamma_\infty^4(1-iu)^3\xi^{-3} \,.
\end{align*}
By the characterizing equation \eqref{boundary} for the edge of the spectrum, we have
$$
- \tfrac14 \xi - \tfrac14 (1-\gamma_\infty)^2 / \xi + \tfrac12 (1+\gamma_\infty) = 0 \,.
$$
Thus, the terms in the large round brackets cancel out, and the remaining sum 
can be simplified to
\begin{align*}
T = \left( \tfrac{1}{12} \gamma_\infty (1-iu)^3 \pm \tfrac{1}{4} (\mu+\nu) (1+\gamma_\infty-\xi) (1-iu) - \tfrac{1}{4} (\mu-\nu)^2 (1-iu)^{-1} \right) / \xi \,.
\end{align*}
Recall that $\xi = (1 \pm \sqrt{\gamma_\infty})^2$
at the upper edge and the lower edge of the spectrum, respectively. 
Plugging this into the expression for $T$ and simplifying, we obtain 
\begin{align*}
T = \left( \tfrac{1}{12} \gamma_\infty (1-iu)^3 - \tfrac{1}{2} \sqrt{\gamma_\infty} (\mu+\nu) (1-iu) - \tfrac{1}{4} (\mu-\nu)^2 (1-iu)^{-1} \right) / \xi \,,
\end{align*}
and the proof of \eqref{claim21} is complete.

\pagebreak[2]
\medskip

The proof of \eqref{claim22} is similar to that of \eqref{claim21}.
Setting $z := (1-t/N^{1/3}) \exp(ia/N^{1/3})$ with $t \in [0;1]$
and assuming that $N \in \mathbb{N}$ is large enough,
we~have the following~bounds:
\begin{align*}
|h_1(z)|
&\leq
\Big| \exp \Big( -\tfrac{1}{4} (t-ia) \xi N^{2/3} - \tfrac{1}{8} t^2 \xi N^{1/3} - \tfrac{1}{12} t^3 \xi + \tfrac{1}{192} (t-ia)^3 \xi \\
&\qquad \mp \tfrac{1}{8} (\mu+\nu) (t-ia) - \tfrac{1}{4} (\mu-\nu)^2 \xi^{-1} (t-ia)^{-1} + b^* \Big) \big( 1+o_a(1) \big) \Big| \,, 
\\
|h_2(z)|
&\leq
\Big| \frac{C}{\sqrt{4\pi N^{4/3} \xi/(t-ia)}}
\cdot
\exp 
\Big(
    - \tfrac{1}{4} \hat\gamma_\infty^2 (t-ia) \xi^{-1} N^{2/3} - \tfrac{1}{8} \hat\gamma_\infty^2 t^2 \xi^{-1} N^{1/3} - \tfrac{1}{12} \hat\gamma_\infty^2 t^3 \xi^{-1} \\
    & \qquad - \tfrac{1}{96} \hat\gamma_\infty^2 (t-ia)^3 \xi^{-1} \pm \tfrac{1}{8} \hat\gamma_\infty^2 (\mu+\nu) (t-ia) \xi^{-2} + \tfrac{1}{192} \hat\gamma_\infty^4 (t-ia)^3 \xi^{-3} 
\Big)
\cdot
\big( 1+o_a(1) \big) \Big| \,,
\allowdisplaybreaks
\\
      |1-z|^{2}
&\geq \big| (t-ia)^2/N^{2/3} \big( 1+o_a(1) \big) \big| \,,
\\
      |z|^{(m+n)/2}
&\geq \big| \exp \left( -\tfrac12(1+\gamma_\infty) \big( (t-ia)N^{2/3} + \tfrac12 t^2 N^{1/3} + \tfrac13 t^3 \big) \right) \big( 1+o_a(1) \big) \big| \,.
\end{align*}

Let us comment on the second bound (the others bounds being straightforward).
The uniform asymptotic approximation \eqref{bessel-u1} is not applicable anymore,
since $w(z)$ approaches the imaginary axis as $t \to 0$:
$$
w(z) = 2N^{4/3} \xi / (t-ia) \left( 1 + o_a(1) \right) \,.
$$
However, we may use the uniform asymptotic bound \eqref{bessel-u3} instead,
since $\re \! w(z)$ $\geq 0$, as follows from a similar estimate as in \eqref{bound1c}.
We thus obtain, for all $N \in \mathbb{N}$ sufficiently large (depending on $a$),
\begin{align*}
   \Big| h_2(z) \Big|
&= \Big| \exp(-w(z)) \cdot I_\alpha(w(z)) \Big| \\ 
&\le \Bigg| \frac{C}{\sqrt{2\pi w(z)}} \, \exp \left( - \tfrac12\alpha^2/w(z) + \tfrac{1}{24}\alpha^4/w^3(z) \right) \left( 1 + o_a(1) \right) \Bigg| \,,
\end{align*}
where $C > 0$ is a constant not depending on $a$.
Since
\begin{align*}
   \frac{1}{w(z)}
&= \tfrac12 (t-ia) \xi^{-1} N^{-4/3} + \tfrac14 t^2 \xi^{-1} N^{-5/3} + \tfrac16 t^3 \xi^{-1} N^{-2} \\&\quad\,+\, \tfrac{1}{48} (t-ia)^3 \xi^{-1} N^{-2} \mp \tfrac{1}{4} (\mu+\nu) (t-ia) \xi^{-2} N^{-2} + \myo_a(N^{-7/3}) \,,
\end{align*}
this entails the asserted bound by substituting $1/w(z)$ and simplifying.

Putting it all together and doing similar simplifications as in the proof of \eqref{claim21}, 
we~find that the integral on the left-hand side in \eqref{claim22} is bounded above by
$$
\frac{C \exp(b^*)}{\sqrt{4\pi\xi}} \int_{0}^{1}
\frac{|\exp(T)|}{|t-ia|^{3/2}} \, (1+o_a(1))
\ dt \,,
$$
where $C$ is a positive constant which does not depend on $a$
(and which may change from step to step as usual)
and
\begin{align*}
T = \left( \tfrac{1}{12} \gamma_\infty (t-ia)^3 - \tfrac{1}{2} (\mu+\nu) \sqrt{\gamma_\infty} (t-ia) - \tfrac{1}{4} (\mu-\nu)^2 (t-ia)^{-1} \right) / \xi \,.
\end{align*}
The real part of this expression is obviously of order $\mathcal{O}(1)$.
It therefore follows that
$$
\limsup_{N \to \infty} \frac{C}{\sqrt{4\pi\xi}} \int_{0}^{1} \frac{|\exp(T)|}{|t-ia|^{3/2}} \, (1+o_a(1)) \ dt
\leq
\frac{C a^{-3/2}}{\sqrt{4\pi\xi}} \,.
$$
Since for $a > 1$ sufficiently large, the right-hand side is clearly
bounded above by~$\delta$, this proves \eqref{claim22}.

\pagebreak[2]

It remains to prove \eqref{claim23}. 
Using similar estimates as in \eqref{bound1a} and \eqref{bound1b},
we see that it suffices to show that
\begin{align}
\label{claim24}
\limsup_{N \to \infty} \left( N^{1/3} \int_{a/N^{1/3}}^{\pi} \frac{|h_2(e^{it})|}{|1-e^{it}|^2} \, dt \right) \leq \delta
\end{align}
for $a > 1$ sufficiently large. 
Moreover, by similar estimates as in \eqref{bound1c} and \eqref{bound1d},
we have $\re w(e^{it}) \geq 0$ and $\im w(e^{it}) \geq 0$ for all $t \in [a/N^{1/3};\pi]$, 
so we can use \eqref{jbound-1a}~and~\eqref{jbound-2a} to bound $I_\alpha(w(e^{it}))$.

Let $\varepsilon \in (0;1/3)$ be a small constant which will be chosen later,
let~$I$ denote the~subset of those $t \in [a/N^{1/3};\pi]$
such that $\im w(e^{it}) / \alpha_N \in (1-\varepsilon;1+\varepsilon)$,
and let~$J$ denote the complement of this subset.
Using the stronger bound \eqref{jbound-2a} on~$J$
and the weaker bound \eqref{jbound-1a} on~$I$,
we then have, for sufficiently large $N \in \mathbb{N}$,
\begin{align*}
     \int_{a/N^{1/3}}^{\pi} \frac{|h_2(e^{it})|}{|1-e^{it}|^2} dt
&\leq C_\varepsilon N^{-1/2} \int_{J} \frac{1}{|1-e^{it}|^{3/2}} \, dt + C N^{-1/3} \int_{I} \frac{1}{|1-e^{it}|^{5/3}} \, dt \\
&\leq C_\varepsilon N^{-1/2} \int_{J} t^{-3/2} \, dt + C N^{-1/3} \int_{I} t^{-5/3} \, dt \\
&\leq C_\varepsilon N^{-1/2} (N^{1/3}/a)^{1/2} + C N^{-1/3} \mylebesgue(I) / (\inf I)^{5/3} \,.
\end{align*}
Here $\mylebesgue(I)$ denotes the length of the interval $I$,
$C$ is a constant which depends only on $\xi$,
and $C_\varepsilon$ is a constant which may additionally depend on $\varepsilon$.
(Of course, both constants may change from occurrence to occurrence as usual.)
Clearly, once $\varepsilon > 0$ is fixed, we may make the first term 
arbitrarily small by choosing $a > 0$ sufficiently~large.
Hence, to complete the proof of \eqref{claim24}, it remains to establish
an appropriate bound on the second term, \ie on~$\mylebesgue(I) / (\inf I)^{5/3}$.
To~this purpose, first note that
\begin{align*}
   \im w(e^{it}) 
&= N\xi (\sin (t/2))^{-1} \, (1+o(1))
\end{align*}
and therefore
$$
\im w(e^{it}) / \alpha_N = \frac{\xi}{1-\gamma_N} (\sin (t/2))^{-1} \, (1+o(1)) \,.
$$
For the upper edge of the spectrum, we have $\xi > 1$,
which implies that the interval $I$ is empty 
for $\varepsilon$ sufficiently small and $N$ sufficiently large,
and \eqref{claim24} is proven. \linebreak
For the lower edge of the spectrum, we have $0 < \xi < 1$
as well as $\xi\,/\,(1-\gamma_\infty) < 1$ (as is readily verified)
and therefore, for $\varepsilon$ sufficiently small and $N$ sufficiently large,
\begin{align*}
I \subset \bigg[ a/N^{1/3};\pi \bigg] \cap 2 \arcsin \left( \frac{\xi}{(1-\gamma_\infty)(1+2\varepsilon)};\frac{\xi}{(1-\gamma_\infty)(1-2\varepsilon)} \right) \,.
\end{align*}
In particular, $\inf I \geq c$ for some positive constant $c$ depending only 
on $\xi$ and $\gamma_\infty$. Moreover, $\mylebesgue(I)$ can be made arbitrarily small 
by picking $\varepsilon$ sufficiently close to zero. 
Thus, \eqref{claim24} is also proven.

\medskip

The proof of Theorem 1.3 is complete now.
\end{proof}


\newpage

\section{Some bounds for Bessel functions}

In this section we state some uniform asymptotic approximations for Bessel functions
from the literature (see Chapters 10~and~11 in \textsc{Olver} \cite{O}).
Moreover, we deduce several asymptotic bounds for Bessel functions
which are sufficient for controlling the error bounds in the preceding proofs.
These asymptotic bounds should be well-known, but we have not been able 
to find explicit statements suiting our purposes in the literature.
Throughout this section, we~assume that $\alpha \in \mathbb{N}$
and $\alpha \to \infty$.

We need the following uniform asymptotic approximations for Bessel functions,
which can be extracted from Sections 10.7 and 11.10 in \textsc{Olver} \cite{O},
respectively:

\begin{proposition}
For any $\varepsilon > 0$,
\begin{align}
\label{bessel-u1}
I_\alpha(\alpha z) = \frac{1}{\sqrt{2\pi \alpha}} \frac{\exp(\alpha \eta)}{(1+z^2)^{1/4}} \left( 1 + \myo_{\varepsilon}(1/\alpha) \right) \qquad (\alpha \to \infty) \,,
\end{align}
where 
\begin{align}
\label{etadef}
\eta := \sqrt{1+z^2} + \log \frac{z}{1+\sqrt{1+z^2}}
\end{align}
and the $\myo_\varepsilon$-bound holds uniformly in the set
$
\left\{ z \in \mathbb{C} : |\arg z| \leq \tfrac12\pi - \varepsilon \right\} .
$
\end{proposition}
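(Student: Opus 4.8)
The plan is to recognize \eqref{bessel-u1} as the leading term of \textsc{Olver}'s classical uniform asymptotic expansion of $I_\alpha(\alpha z)$ for large order, and to indicate how the Liouville--Green machinery of Chapter~10 of \cite{O} produces it. First I would note that $y(z) := I_\alpha(\alpha z)$ satisfies the rescaled modified Bessel equation $z^2 y'' + z y' - \alpha^2 (1 + z^2) y = 0$. Removing the first-order term via $y = z^{-1/2} v$ turns this into $v'' = \big( \alpha^2 (1+z^2)/z^2 - 1/(4z^2) \big) v$, in which the coefficient of $\alpha^2$ is $(1+z^2)/z^2$ and hence its ``phase'' $\int \sqrt{1+z^2}/z \, dz$ is precisely the function $\eta$ of \eqref{etadef}; one checks at once that $\eta'(z) = \sqrt{1+z^2}/z$. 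The Liouville transformation $\zeta := \eta(z)$, $W := (\eta')^{1/2} v$ then brings the equation to normal form $d^2 W/d\zeta^2 = (\alpha^2 + \psi(z)) W$, where $\psi$ collects the Schwarzian of the change of variables together with the $-1/(4z^2)$ term and is bounded and analytic on each closed sector $|\arg z| \le \tfrac12\pi - \varepsilon$, the only obstruction being the turning points $z = \pm i$ on its boundary.

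Next I would invoke the standard existence-and-error-bound theorem for Liouville--Green approximations (Chapter~6 of \cite{O}): the solution recessive as $z \to 0^+$ has the form $\exp(\alpha\zeta)(1 + \varepsilon_1(\alpha,z))$ with $|\varepsilon_1(\alpha,z)| \le \exp(C_\varepsilon/\alpha) - 1 = \myo_\varepsilon(1/\alpha)$, where $C_\varepsilon$ bounds the total variation of the error-control function along a suitable progressive path inside the sector, and with $\varepsilon_1 \to 0$ at the endpoint $z \to 0^+$. Undoing the two substitutions gives $I_\alpha(\alpha z) = c_\alpha \exp(\alpha\eta) (1+z^2)^{-1/4} (1 + \myo_\varepsilon(1/\alpha))$ with $c_\alpha$ independent of $z$. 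Letting $z \to 0^+$, where $\eta(z) = 1 + \log(z/2) + \myo(z^2)$ and the error vanishes, the right-hand side is $\sim c_\alpha e^\alpha (z/2)^\alpha$, while the power series gives $I_\alpha(\alpha z) \sim (\alpha z/2)^\alpha / \Gamma(\alpha+1)$; hence $c_\alpha = \alpha^\alpha / (e^\alpha \Gamma(\alpha+1))$, and Stirling's formula $\Gamma(\alpha+1) \sim \sqrt{2\pi\alpha}\,(\alpha/e)^\alpha$ identifies this with $(2\pi\alpha)^{-1/2}(1 + \myo(1/\alpha))$, the correction being absorbed into the error factor --- which is \eqref{bessel-u1}. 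A quicker but less naturally uniform alternative would be to start from $I_\alpha(\alpha z) = \tfrac{1}{2\pi}\int_{-\pi}^{\pi} \exp(\alpha(z\cos\theta + i\theta)) \, d\theta$ for integer $\alpha$ and apply the saddle-point method at $\sin\theta_0 = i/z$, where the phase equals $\eta$ and its second derivative equals $-\sqrt{1+z^2}$, reproducing the same leading term and prefactor.

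The step I expect to be the main obstacle is the uniformity claim: one must check that the total variation of the error-control function along the progressive path stays bounded by a constant depending on $\varepsilon$ only, uniformly over $\{ |\arg z| \le \tfrac12\pi - \varepsilon \}$, and that this constant degenerates as $\varepsilon \downarrow 0$ because the path is forced towards the turning points $\pm i$ --- this is exactly why the approximation in this particular form is restricted to such sectors. It is the bookkeeping carried out in Section~10.7 of \cite{O}, with the companion turning-point (Airy-type) analysis near and on the imaginary axis, needed for some of the bounds used later in this paper, treated in Section~11.10 of \cite{O}. For these details I would simply refer to \cite{O} rather than reproduce them.
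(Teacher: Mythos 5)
Your proposal is correct and takes essentially the same route as the paper: Proposition~3.1 is simply cited from Olver (Section~10.7), with no proof given in the paper, and your derivation is an accurate sketch of Olver's Liouville--Green argument (the reduction to normal form, the phase function $\eta$ with $\eta' = \sqrt{1+z^2}/z$, the identification of the constant $c_\alpha$ by matching at $z \to 0^+$ with Stirling, and deferral of the uniform error-control bookkeeping to Olver). The saddle-point alternative you mention also reproduces the leading term correctly, but the paper's and Olver's treatment is the Liouville--Green one, which is what delivers the required uniformity.
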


\begin{proposition}
For any $\varepsilon > 0$,
\begin{multline}
\label{bessel-u2}
J_\alpha(\alpha z) = \left( \frac{4\zeta}{1-z^2} \right)^{1/4} \bigg\{ \frac{\ai(\alpha^{2/3} \zeta)}{\alpha^{1/3}} \big( 1 + \myo_\varepsilon(1/\alpha) \big) \\ + \frac{\ai'(\alpha^{2/3} \zeta)}{\alpha^{5/3}} \myo_\varepsilon \big( (1+|\zeta|^{1/2})^{-1} \big) \bigg\} \qquad (\alpha \to \infty) \,,
\end{multline}
where
\begin{align}
\label{zetadef}
\tfrac23\zeta^{3/2} := \log \frac{1+\sqrt{1-z^2}}{z} - \sqrt{1-z^2}
\end{align}
and the $\myo_\varepsilon$-bounds hold uniformly in the set
$
\left\{ z \in \mathbb{C} : |\arg z| \leq \pi - \varepsilon \right\} .
$
\end{proposition}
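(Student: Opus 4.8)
The plan is to obtain \eqref{bessel-u2} as a special case of \textsc{Olver}'s theory of uniform asymptotic solutions of second-order linear differential equations in the presence of a simple turning point; since \cite[Ch.~11]{O} carries out this analysis explicitly for Bessel functions, the real task is to extract the statement in the precise form and on the precise domain needed here.

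First I would reduce $J_\alpha(\alpha z)$ to a turning-point problem. Writing $y(z) := J_\alpha(\alpha z)$, Bessel's equation takes the form $z^2 y'' + z y' + \alpha^2(z^2-1)y = 0$, and the Liouville substitution $y = z^{-1/2}W$ removes the first-order term, yielding $W'' = \bigl(\alpha^2 f(z) + g(z)\bigr)W$ with $f(z) = (1-z^2)/z^2$ and $g(z) = -1/(4z^2)$. Here $f$ has a single simple zero, at $z=1$, and is otherwise analytic and nonvanishing on $\{z \neq 0\}$, so $z=1$ is a simple turning point. Next I would introduce the Airy variable $\zeta = \zeta(z)$ via the usual normalization $\tfrac23\zeta^{3/2} = \int_z^1 f(t)^{1/2}\,dt$, $\zeta(1) = 0$; evaluating the integral for $0 < z < 1$ gives
\[
\tfrac23 \zeta^{3/2} = \int_z^1 \frac{\sqrt{1-t^2}}{t}\,dt = \log\frac{1+\sqrt{1-z^2}}{z} - \sqrt{1-z^2},
\]
which is \eqref{zetadef}, and one checks that this extends to a conformal map of the sector $|\arg z| < \pi$, with $\zeta \to +\infty$ as $z \to 0^+$ and $\zeta \to -\infty$ as $z \to +\infty$.

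Then I would apply the turning-point theorem of \cite[Ch.~11]{O}, with Airy comparison functions, to the transformed equation. It furnishes a fundamental system of solutions of the shape $z^{-1/2}\bigl(\tfrac{\zeta}{f(z)}\bigr)^{1/4}\bigl\{\ai(\alpha^{2/3}\zeta)(1+\varepsilon_1) + \alpha^{-4/3}\aiprime(\alpha^{2/3}\zeta)\,\varepsilon_2\bigr\}$, together with \emph{explicit} majorants for the remainders $\varepsilon_1,\varepsilon_2$ in terms of the total variation of the associated error-control function $\mathscr{F}(\zeta)$; the majorant for $\varepsilon_2$ carries precisely the weight $(1+|\zeta|^{1/2})^{-1}$ that decorates the $\aiprime$-term in \eqref{bessel-u2}. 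I would fix the normalization constant, and thereby identify the relevant solution as $J_\alpha(\alpha z)$, by matching against the classical Debye/oscillatory asymptotics of $J_\alpha(\alpha z)$ away from $z=1$; the combination of this constant, the factor $z^{-1/2}$, and $\bigl(\tfrac{\zeta}{f(z)}\bigr)^{1/4}$ produces exactly the prefactor $\bigl(\tfrac{4\zeta}{1-z^2}\bigr)^{1/4}$, with the powers of $\alpha$ redistributed as displayed. Truncating after the leading term then turns Olver's majorants into the error terms $\myo_\varepsilon(1/\alpha)$ and $\myo_\varepsilon\bigl((1+|\zeta|^{1/2})^{-1}\bigr)$.

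The hard part will be the uniformity claim: that the error bounds hold uniformly for $|\arg z| \le \pi - \varepsilon$. This is the one genuinely analytic point, and it amounts to checking that from every such $z$ the reference point $\zeta = +\infty$ can be reached along a progressive path on which $\mathscr{F}$ has bounded variation, with the variation bounded uniformly over the closed sector --- the delicate part being to keep this control all the way out to $z = \infty$ and up to the boundary rays $\arg z = \pm\pi$, where $\zeta \to -\infty$. This verification is precisely what is done in \cite[Section~11.10]{O} for Bessel functions, and it yields exactly the domain $\{z \in \mathbb{C} : |\arg z| \le \pi - \varepsilon\}$. In practice, then, the proof reduces to quoting \cite[Section~11.10]{O} and rewriting the explicit remainder estimates found there in the $\myo_\varepsilon$-notation used above.
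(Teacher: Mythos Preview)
Your proposal is correct and follows essentially the same route as the paper: the paper does not prove this proposition at all but simply states that it ``can be extracted from Sections~10.7 and~11.10 in \textsc{Olver}~\cite{O}''. Your sketch unpacks precisely what is behind that citation --- the Liouville reduction of Bessel's equation to a simple turning-point problem, the Airy-variable change of coordinates giving \eqref{zetadef}, Olver's uniform turning-point theorem with its explicit error-control majorants, and the uniformity of the variation bound over the closed sector $|\arg z|\le\pi-\varepsilon$ --- and your concluding sentence, that ``the proof reduces to quoting \cite[Section~11.10]{O}'', is exactly the paper's stance.
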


We refer to Sections 10.7 and 11.10 in \textsc{Olver} \cite{O}
for a discussion of the choices of the various branches.

By Proposition 3.1 we have the following bound on $I_\alpha(\alpha z)$,
uniformly in the set 
$\{ z \in \mathbb{C} : |z| \geq 1+\varepsilon \,\wedge\, |\arg z| \leq \tfrac12\pi - \varepsilon \}$:
$$
I_\alpha(\alpha z) = \myo_\varepsilon \left( \frac{\exp(\alpha \eta)}{(\alpha z)^{1/2}} \right) \qquad (\alpha \to \infty) \,.
$$
Using Proposition 3.2 it can be shown that this bound in fact remains valid 
up to the imaginary axis:

\begin{lemma}
For any $\varepsilon > 0$, 
\begin{align}
\label{bessel-u3}
I_\alpha(\alpha z) = \mathcal{O}_\varepsilon \left( \frac{\exp(\alpha\eta)}{(\alpha z)^{1/2}} \right) \qquad (\alpha \to \infty) \,,
\end{align}
where $\eta$ is defined as in \eqref{etadef}
and the $\myo_\varepsilon$-bound holds uniformly in the set \linebreak
$\{ z \in \mathbb{C} : |z| \geq 1+\varepsilon \,\wedge\, |\arg z| \leq \tfrac12\pi \}$.
\end{lemma}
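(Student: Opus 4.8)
The plan is to split the region $\{\,z\in\mycmplx:|z|\ge 1+\varepsilon,\ |\arg z|\le\tfrac12\pi\,\}$ into its interior and a thin sector hugging the imaginary axis. On the interior the bound will follow directly from Proposition~3.1, while on the thin sector I would reduce the modified Bessel function $I_\alpha$ to the ordinary Bessel function $J_\alpha$ via the connection formula and invoke Proposition~3.2.

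First, fix a small $\delta=\delta(\varepsilon)>0$. For $|\arg z|\le\tfrac12\pi-\delta$ Proposition~3.1 applies directly; since $|z|\ge 1+\varepsilon$ gives $|1+z^2|\ge|z|^2-1\ge\big(1-(1+\varepsilon)^{-2}\big)|z|^2$, we get $(1+z^2)^{-1/4}=\myo_\varepsilon(|z|^{-1/2})$, and together with $(2\pi\alpha)^{-1/2}=\myo(\alpha^{-1/2})$ this is exactly \eqref{bessel-u3} on this part. It remains to treat the sector $\tfrac12\pi-\delta\le\arg z\le\tfrac12\pi$; the sector $-\tfrac12\pi\le\arg z\le-\tfrac12\pi+\delta$ then follows by conjugation, using $I_\alpha(\bar w)=\overline{I_\alpha(w)}$ and $\eta(\bar z)=\overline{\eta(z)}$.

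On this sector I would put $w:=-iz$, so that $I_\alpha(\alpha z)=e^{\alpha\pi i/2}J_\alpha(\alpha w)$ (a connection formula valid for $-\tfrac12\pi\le\arg z\le\pi$, cf.\ \cite{O}), $|\arg w|\le\delta$, and $|w|=|z|\ge 1+\varepsilon$; in particular $w$ stays in the right half-plane with $|w-1|\ge\varepsilon$, so the turning point is avoided, the Liouville--Green variable $\zeta=\zeta(w)$ of \eqref{zetadef} obeys $|\zeta|\asymp(1+|z|)^{2/3}$, and $\arg\zeta$ stays close to $\pi$ (so $\alpha^{2/3}\zeta$ lies near the negative real axis, in the oscillatory regime of the Airy function). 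The decisive point is the identity
\begin{align*}
\tfrac23\,\zeta(w)^{3/2}=-\eta(z)+\tfrac12\pi i\,,
\end{align*}
obtained by inserting $1-w^2=1+z^2$ and $w=e^{-i\pi/2}z$ into \eqref{zetadef} and comparing with \eqref{etadef}; it gives $\re\zeta^{3/2}=-\tfrac32\re\eta(z)$, and the phase $e^{\alpha\pi i/2}$ from the connection formula precisely cancels the factor $e^{-\alpha\pi i/2}$ hidden in $\exp(-\tfrac23\alpha\zeta^{3/2})$. Feeding the standard bounds $|\ai(x)|=\myo\big((1+|x|)^{-1/4}\exp(-\tfrac23\re x^{3/2})\big)$ and $|\aiprime(x)|=\myo\big((1+|x|)^{1/4}\exp(-\tfrac23\re x^{3/2})\big)$ for $|\arg x|\le\pi$ (see \cite{O}; near $\arg x=\pi$ one uses the connection formula $\ai(x)=-\omega\,\ai(\omega x)-\omega^2\ai(\omega^2 x)$, $\omega=e^{2\pi i/3}$) into \eqref{bessel-u2} with $x=\alpha^{2/3}\zeta$, the exponential becomes $\exp(-\tfrac23\alpha\re\zeta^{3/2})=\exp(\alpha\re\eta(z))$, while, using $|1-w^2|=|1+z^2|\asymp(1+|z|)^2$,
\begin{align*}
\left|\frac{4\zeta}{1-w^2}\right|^{1/4}\cdot\frac{(1+\alpha^{2/3}|\zeta|)^{-1/4}}{\alpha^{1/3}}=\myo_\varepsilon\big(\alpha^{-1/2}(1+|z|)^{-1/2}\big)=\myo_\varepsilon\big((\alpha|z|)^{-1/2}\big)\,,
\end{align*}
and the $\aiprime$-contribution in \eqref{bessel-u2} is smaller by a factor $\myo(\alpha^{-1})$. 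Since $|\exp(\alpha\eta(z))/(\alpha z)^{1/2}|=\exp(\alpha\re\eta(z))/(\alpha|z|)^{1/2}$, this yields \eqref{bessel-u3} on the sector.

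The step I anticipate to be the main obstacle is the branch bookkeeping behind $\tfrac23\zeta^{3/2}=-\eta(z)+\tfrac12\pi i$: one must check that the branch of $\sqrt{1-w^2}$ prescribed in \eqref{zetadef} for $\arg w$ slightly negative agrees with that of $\sqrt{1+z^2}$ in \eqref{etadef} for $\arg z$ slightly below $\tfrac12\pi$, and that the resulting $\arg\zeta$ indeed sits near $\pi$ rather than on the ``wrong'' branch near $-\tfrac13\pi$. This is exactly where the oscillatory regime $\arg(\alpha^{2/3}\zeta)\approx\pi$ of the Airy function enters, reflecting the fact that $\re\eta=0$ on the imaginary axis.
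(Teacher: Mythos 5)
Your proof is correct, and the substance is the same as the paper's: reduce $I_\alpha$ to $J_\alpha$ by the connection formula, invoke the uniform Airy-type asymptotics of Proposition~3.2, use two-term Airy bounds to cover the oscillatory regime near $\arg(\alpha^{2/3}\zeta)=\pi$, and convert the resulting exponent via the branch identity $\tfrac23\zeta(\mp iz)^{3/2}=-\eta(z)\pm\tfrac12\pi i$. Your bound $|\ai(x)|=\myo\bigl((1+|x|)^{-1/4}\exp(-\tfrac23\re x^{3/2})\bigr)$ is exactly the paper's $\ai(-z)=\myo\bigl(z^{-1/4}\exp(|\im\tfrac23z^{3/2}|)\bigr)$ restated (with $x=-z$, one has $-\re x^{3/2}=|\im z^{3/2}|$ on the relevant sector). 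The only real difference is organizational: the paper runs the $J_\alpha$ estimate over the whole region $M_\varepsilon=\{|z|\ge1+\varepsilon,\ |\arg z|\le\pi-\varepsilon\}$ and transfers to $I_\alpha$ on the closed half-plane by $z\mapsto\mp iz$ in one step, whereas you first dispose of the interior $|\arg z|\le\tfrac12\pi-\delta$ directly via Proposition~3.1 (using $|1+z^2|\gtrsim_\varepsilon|z|^2$), and run the $J_\alpha$/Airy argument only on thin slices hugging the imaginary axis. The paper's version is marginally more unified; yours confines the branch bookkeeping and the two-term Airy expansion to a small angular neighborhood of the turning-point ray, which makes the key identity slightly easier to check. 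Either way, the proof goes through.
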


\pagebreak[2]

\begin{proof}
Fix $\varepsilon > 0$, and consider $J_\alpha(\alpha z)$ with $\alpha \in \mathbb{N}$, 
$z \in M_\varepsilon := \{ z \in \mathbb{C} : |z| \geq 1+\varepsilon$ $\wedge$ $|\arg z| \leq \pi - \varepsilon \}$.
We then have the uniform asymptotic approximation \eqref{bessel-u2}.
We~now use the following asymptotic approximations for the Airy function
and its derivative, which hold for $|z| \to \infty$, $|\arg z| \leq \tfrac23\pi - \varepsilon'$
(see Chapters 11.1~and~11.8 in \textsc{Olver} \cite{O}):
\begin{multline*}
\ai\mskip5mu(-z) = \pi^{-1/2} \, z^{-1/4} \bigg\{ \cos(\tfrac23 z^{3/2}-\tfrac14\pi) \left( 1 + \myo_{\varepsilon'}(z^{-3/2}) \right) \\ + \sin(\tfrac23 z^{3/2}-\tfrac14\pi) z^{-3/2} \left( 1 + \myo_{\varepsilon'}(z^{-3/2}) \right) \bigg\} \,,
\end{multline*}
\begin{multline*}
\ai      '  (-z) = \pi^{-1/2} \, z^{+1/4} \bigg\{ \sin(\tfrac23 z^{3/2}-\tfrac14\pi) \left( 1 + \myo_{\varepsilon'}(z^{-3/2}) \right) \\ - \cos(\tfrac23 z^{3/2}-\tfrac14\pi) z^{-3/2} \left( 1 + \myo_{\varepsilon'}(z^{-3/2}) \right) \bigg\} \,.
\end{multline*}
Using the inequalities
$|\cos z| \leq \exp(|\im z|)$, 
$|\sin z| \leq \exp(|\im z|)$,
and the fact that $\ai$, $\ai'$ are holomorphic functions,
it follows that
$$
\ai\mskip5mu(-z) = \myo_{\varepsilon'} \left( z^{-1/4} \, \exp(|\im \tfrac23 z^{3/2}|) \right) \,,
$$
$$
\ai      '  (-z) = \myo_{\varepsilon'} \left( z^{+1/4} \, \exp(|\im \tfrac23 z^{3/2}|) \right) \,,
$$
uniformly in $z \in \mathbb{C}$ such that $|z| \geq \varepsilon'$ and $|\arg z| \leq \tfrac23\pi - \varepsilon'$.

It can be checked that for $\alpha \in \mathbb{N}$, $z \in M_\varepsilon$, 
\mbox{$|-\alpha^{2/3}\zeta| \geq \varepsilon'$} and 
\mbox{$\arg (-\alpha^{2/3}\zeta) \leq \frac{2}{3} \pi - \varepsilon'$}
for some $\varepsilon' > 0$ depending only on $\varepsilon > 0$.
Hence,
$$
\ai\mskip5mu(\alpha^{2/3} \zeta) = \myo_\varepsilon \left( \alpha^{-1/6} (-\zeta)^{-1/4} \exp(|\im i\alpha\xi|) \right) \,,
$$
$$
\ai      '  (\alpha^{2/3} \zeta) = \myo_\varepsilon \left( \alpha^{+1/6} (-\zeta)^{+1/4} \exp(|\im i\alpha\xi|) \right) \,,
$$
uniformly in $z \in M_\varepsilon$,
where $\zeta = \zeta(z)$ is defined as in \eqref{zetadef}
and $\xi = \xi(z) := \tfrac{2}{3} \zeta^{3/2}$.
Inserting this into \eqref{bessel-u2},
it~follows that
\begin{align*}
J_\alpha(\alpha z) = \myo_\varepsilon \left( \frac{\exp(|\re \alpha\xi(z)|)}{(\alpha z)^{1/2}} \right) \qquad (\alpha \to \infty) \,,
\end{align*}
the $\myo_\varepsilon$-bound holding uniformly in $z \in M_\varepsilon$.

Thus, for $z \in \{ z \in \mathbb{C} : |z| \geq 1+\varepsilon \,\wedge\, |\arg z| \leq \tfrac12\pi \}$,
we have
$$
|I_\alpha(\alpha z)| = |J_\alpha(-i\alpha z)| = \myo_\varepsilon \left( \frac{\exp(|\re \alpha\xi(-iz)|)}{(\alpha z)^{1/2}} \right) \qquad (\alpha \to \infty) \,,
$$
when $\im z \geq 0$ and
$$
|I_\alpha(\alpha z)| = |J_\alpha(+i\alpha z)| = \myo_\varepsilon \left( \frac{\exp(|\re \alpha\xi(+iz)|)}{(\alpha z)^{1/2}} \right) \qquad (\alpha \to \infty) \,,
$$
when $\im z \leq 0$.
A comparison of the definitions \eqref{etadef} and \eqref{zetadef}
(together with a~careful discussion of the choice of branches)
shows that
$
\xi(\mp iz) = - \eta(z) \pm \tfrac12 \pi i \,,
$
which completes the proof of the lemma.
\end{proof}

\pagebreak[2]

The next lemma gives some upper bounds for the (unmodified) Bessel function
on the positive real half-axis:

\begin{lemma}
\label{jboundlemma}
There exists a constant $C > 0$ 
such that for all sufficiently large $\alpha \in \mathbb{N}$,
\begin{align}
\label{jbound-1}
|J_\alpha(\alpha x)| \leq \frac{C}{|\alpha x|^{1/3}} 
\qquad&\text{for all $x \in (0,\infty)$} \,.
\end{align}
More precisely, for any $\varepsilon > 0$, there exists a constant $C_\varepsilon > 0$ 
such that for all sufficiently large $\alpha \in \mathbb{N}$,
\begin{align}
\label{jbound-2}
|J_\alpha(\alpha x)| \leq \frac{C_\varepsilon}{|\alpha x|^{1/2}} 
\qquad&\text{for all $x \in (0,1-\varepsilon) \cup (1+\varepsilon,\infty)$} \,.
\end{align}
\end{lemma}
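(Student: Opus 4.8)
The plan is to deduce both estimates from the uniform Airy-type asymptotic \eqref{bessel-u2}, using the standard bounds
\[
\sup_{t \in \myreal} |\ai(t)| < \infty \,, \qquad |\ai(t)| \le C \, (1+|t|)^{-1/4} \,, \qquad |\ai'(t)| \le C \, (1+|t|)^{+1/4} \qquad (t \in \myreal)
\]
(which follow from the asymptotic expansions of $\ai$ and $\ai'$ quoted in the proof of the preceding lemma, together with the exponential decay of $\ai$ and $\ai'$ on $[0,\infty)$), and a careful analysis of the \emph{amplitude function}
\[
A(x) := \left( \frac{4\,\zeta(x)}{1-x^2} \right)^{1/4} \,, \qquad x \in (0,\infty) \,,
\]
where $\zeta(x)$ is defined by \eqref{zetadef} with $z = x$, so that $\zeta(x) > 0$ for $0 < x < 1$ and $\zeta(x) < 0$ for $x > 1$.

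First I would collect the properties of $A$. From \eqref{zetadef} one checks that $x \mapsto \zeta(x)$ is smooth and strictly decreasing on $(0,\infty)$, with $\zeta(0^{+}) = +\infty$, $\zeta(1) = 0$ and $\zeta(+\infty) = -\infty$, and --- since near the turning point $x = 1$ the numerator $4\,\zeta(x)$ and the denominator $1-x^{2}$ of $A(x)$ vanish to the same order --- that $A$ extends to a continuous, strictly positive function on all of $(0,\infty)$. Examining \eqref{zetadef} near $x=0$ and near $x=\infty$ gives $|\zeta(x)| \asymp (\log(1/x))^{2/3}$ as $x \to 0^{+}$ and $|\zeta(x)| \asymp x^{2/3}$ as $x \to \infty$, whence $A(x) \asymp (\log(1/x))^{1/6}$ as $x \to 0^{+}$ and $A(x) \asymp x^{-1/3}$ as $x \to \infty$. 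Together with continuity on compact subsets of $(0,\infty)$, this yields the two facts I actually need: $A(x) \le C\,x^{-1/3}$ uniformly on $(0,\infty)$, and, for every $\varepsilon > 0$,
\[
A(x)\,(1+|\zeta(x)|)^{-1/4} \;\le\; C\,|1-x^{2}|^{-1/4} \;\le\; C_\varepsilon\,x^{-1/2} \qquad \text{for } x \in (0,1-\varepsilon)\cup(1+\varepsilon,\infty) \,,
\]
the first inequality being immediate from the definition of $A$ and the second a one-line computation, since $x^{2}/|x^{2}-1|$ is bounded on that range.

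Next I would insert these into \eqref{bessel-u2}, separating the range $|x-1| < \varepsilon$, where $\zeta(x)$ is bounded, from the range $|x-1| \ge \varepsilon$, where $|\zeta(x)|$ is bounded away from $0$. On the first range the bracketed sum in \eqref{bessel-u2} is $\myo(\alpha^{-1/3})$ by boundedness of $\ai$ and $\ai'$ on compact sets, and $A(x)$ is bounded, so $|J_\alpha(\alpha x)| \le C\alpha^{-1/3} \le C(\alpha x)^{-1/3}$. On the second range I would use $|\ai(\alpha^{2/3}\zeta)| \le C\,(\alpha^{2/3}|\zeta|)^{-1/4}$ and $|\ai'(\alpha^{2/3}\zeta)| \le C\,\alpha^{1/6}(1+|\zeta|)^{1/4}$, together with $(1+|\zeta|)^{1/4}/(1+|\zeta|^{1/2}) \le C\,(1+|\zeta|)^{-1/4}$, so that the two summands of the bracket in \eqref{bessel-u2}, multiplied by the amplitude $A(x)$, are bounded respectively by $C\alpha^{-1/2}|1-x^{2}|^{-1/4}$ and $C\alpha^{-3/2}|1-x^{2}|^{-1/4}$; by the amplitude estimate above both are $\le C_\varepsilon(\alpha x)^{-1/2}$, which is \eqref{jbound-2} on the range $|x-1| \ge \varepsilon$. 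Finally, combining the two ranges and using $A(x) \le C\,x^{-1/3}$ together with the elementary inequalities $\alpha^{-1/2} \le (\alpha x)^{-1/3}$ for $0 < x \le 1$ and $(\alpha x)^{-1/2} \le (\alpha x)^{-1/3}$ for $\alpha x \ge 1$ yields \eqref{jbound-1} on all of $(0,\infty)$ (take, say, $\varepsilon = \tfrac12$ there).

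The main obstacle is the analysis of the amplitude function $A$: one has to verify that it extends continuously and without zeros across the turning point $x = 1$, where $\zeta$ and $1-x^{2}$ both vanish, and to pin down its power-law behaviour $A(x) \asymp x^{-1/3}$ as $x \to \infty$ and its mild logarithmic growth as $x \to 0^{+}$. This rests on the implicit relation \eqref{zetadef} and on carefully tracking the branches entering \eqref{zetadef} and \eqref{bessel-u2}; once that is in place, everything else is the routine bookkeeping indicated above.
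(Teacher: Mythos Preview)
Your proposal is correct and follows essentially the same approach as the paper's own proof: both start from the uniform Airy-type expansion \eqref{bessel-u2}, invoke the standard bounds $|\ai(t)| = \myo((1+|t|)^{-1/4})$, $|\ai'(t)| = \myo((1+|t|)^{1/4})$, and then reduce everything to the behaviour of the amplitude $A(x) = (4\zeta/(1-x^2))^{1/4}$, using precisely the asymptotics $|\zeta| \asymp x^{2/3}$ as $x\to\infty$ and $|\zeta| \asymp |\log x|^{2/3}$ as $x\to 0^+$ that the paper also records. The only cosmetic difference is that the paper packages the two estimates into a single inequality $|J_\alpha(\alpha x)| \leq C \min\{\,|\zeta/(1-x^2)|^{1/4}\alpha^{-1/3},\ |1-x^2|^{-1/4}\alpha^{-1/2}\,\}$ and reads off \eqref{jbound-1} and \eqref{jbound-2} from the two arguments of the $\min$, whereas you split into the ranges $|x-1|<\varepsilon$ and $|x-1|\ge\varepsilon$ and then recombine; your combination step (using $\alpha^{-1/2}\le(\alpha x)^{-1/3}$ for $0<x\le 1$ and $(\alpha x)^{-1/2}\le(\alpha x)^{-1/3}$ for $\alpha x\ge 1$) is correct, though the mention of $A(x)\le Cx^{-1/3}$ there is superfluous for that particular route.
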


These bounds are not the best possible, but they are sufficient for our purposes.
In~terms of the modified Bessel function, they read
\begin{align}
\label{jbound-1a}
|I_\alpha(i \alpha x)| \leq \frac{C}{|\alpha x|^{1/3}} 
\qquad&\text{for all $x \in (0,\infty)$}
\end{align}
and
\begin{align}
\label{jbound-2a}
|I_\alpha(i \alpha x)| \leq \frac{C_\varepsilon}{|\alpha x|^{1/2}} 
\qquad&\text{for all $x \in (0,1-\varepsilon) \cup (1+\varepsilon,\infty)$} \,,
\end{align}
respectively. This is the form in which they have been used in the last section.

\begin{proof}[Proof of Lemma \ref{jboundlemma}]
We start from the uniform asymptotic approximation \eqref{bessel-u2},
which we now consider for $x > 0$ only.
Note that $\zeta = \zeta(x)$ is a decreasing function of $x$
with $\zeta > 0$ for $x < 1$ and $\zeta < 0$ for $x > 1$.
There exists a constant $C' > 0$ such~that 
\begin{align}
\label{jbound-tmp}
|J_\alpha(\alpha x)| \leq C' \left| \frac{4\zeta}{1-x^2} \right|^{1/4} \left\{ \frac{|\ai(\alpha^{2/3} \zeta)|}{\alpha^{1/3}} + \frac{|\ai'(\alpha^{2/3} \zeta)|}{\alpha^{5/3} (1 + |\zeta|^{1/2})} \right\}
\end{align}
for all $x > 0$ for all sufficiently large $\alpha \in \mathbb{N}$.
We now use the bounds 
\begin{align*}
\ai \mskip5mu (+x) &= \myo\big( x^{-1/4} \, \exp(-\tfrac23x^{3/2}) \big) && (x \to +\infty) \,,
\\
\ai'(+x) &= \myo\big( x^{+1/4} \, \exp(-\tfrac23x^{3/2}) \big) && (x \to +\infty) \,,
\\
\ai \mskip5mu (-x) &= \myo\big( x^{-1/4} \big) && (x \to +\infty) \,,
\\
\ai'(-x) &= \myo\big( x^{+1/4} \big) && (x \to +\infty) \,,
\end{align*}
which follow from well-known asymptotic approximations 
for the Airy function and its~derivative
(see \eg Section~11.1 in \textsc{Olver} \cite{O}). 
In particular, these bounds imply that 
$\ai(x) = \myo((1+|x|^{1/4})^{-1})$ and $\ai'(x) = \myo(1 + |x|^{1/4})$ 
throughout the~real~line. It therefore follows from \eqref{jbound-tmp} 
that there exist constants $C'',C''' > 0$ such that
\begin{align*}
      |J_\alpha(\alpha x)| 
& \leq C'' \left| \frac{4\zeta}{1-x^2} \right|^{1/4} \left\{ \frac{1}{\alpha^{1/3} (1+\alpha^{1/6}|\zeta|^{1/4})} + \frac{(1+\alpha^{1/6}|\zeta|^{1/4})}{\alpha^{4/3} (1 + \alpha^{1/3} |\zeta|^{1/2})} \right\} \\
&\leq C''' \min \left\{ \left| \frac{\zeta}{1-x^2} \right|^{1/4} \, \frac{1}{\alpha^{1/3}} , \left| \frac{1}{1-x^2} \right|^{1/4} \, \frac{1}{\alpha^{1/2}} \right\} 
\end{align*}
for all $x > 0$ for all sufficiently large $\alpha \in \mathbb{N}$.

To deduce \eqref{jbound-1}, use the first term inside the minimum 
and observe that since
$|\zeta| \sim \tfrac{3}{2} |x|^{2/3}$ for $x \to \infty$, 
$|\zeta| \sim \tfrac{3}{2} |\log x|^{2/3}$ for $x \to 0$, 
and $\zeta$ is an analytic function of $x \in (0,\infty)$
with $\zeta(1) = 0$ (see \eg Section~11.10 in \textsc{Olver} \cite{O}), 
$$ 
\left( \frac{4\zeta}{1-x^2} \right)^{1/4} = \myo(x^{-1/3}) \qquad (0 < x < \infty) \,.
$$

To deduce \eqref{jbound-2}, use the second term inside the minimum
and observe that for~$x \not\in (1-\varepsilon,1+\varepsilon)$,
$|1-x^2|^{-1/4} = \myo_\varepsilon(x^{-1/2})$.
\end{proof}


\newpage

\enlargethispage{6.0\baselineskip}

\end{document}